\theoremstyle{plain} 
\newtheorem{theorem}{\indent\sc Theorem}[section]
\newtheorem{lemma}[theorem]{\indent\sc Lemma}
\newtheorem{corollary}[theorem]{\indent\sc Corollary}
\theoremstyle{definition} 
\newtheorem{remark}[theorem]{\indent\sc Remark}
\newtheorem{example}[theorem]{\indent\sc Example}
\def\address#1#2{\begingroup
\noindent\parbox[t]{7.8cm}{%
\small{\scshape\ignorespaces#1}\par\vskip1ex
\noindent\small{\itshape E-mail address}%
\/: #2\par\vskip4ex}\hfill%
\endgroup}%
\title{Some applications of eta-quotients} 
\author{
\textsc{Ick Sun Eum, Ja Kyung Koo and Dong Hwa Shin$^*$} 
}
\date{} 
\begin{document}

\maketitle

\footnote{ 
2010 \textit{Mathematics Subject Classification}. Primary 11F20, Secondary 11F11, 11R37.}
\footnote{ 
\textit{Key words and phrases}. Eta-quotients, modular forms, ring class fields.}
\footnote{
\thanks{
The second named author was partially supported by the NRF of Korea grant
funded by the MISP (2013042157).
$^*$The corresponding author was
supported by Hankuk University of Foreign Studies Research Fund of
2013.} }

\begin{abstract}
We show that every modular form  on $\Gamma_0(2^n)$ ($n\geq2$)
can be expressed as a sum of eta-quotients.
Furthermore, we construct a primitive generator of the ring class field
of the order of conductor $4N$ ($N\geq1$) in an imaginary quadratic field
in view of the special value of certain eta-quotient.
\end{abstract}

\section {Introduction}

For a positive integer $N$ let
\begin{equation*}
\Gamma_0(N)=\{\left[\begin{matrix}a&b\\c&d\end{matrix}\right]\in\mathrm{SL}_2(\mathbb{Z})~|~
c\equiv0\pmod{N}\}.
\end{equation*}
This group acts on the complex upper-half plane $\mathbb{H}$
as fractional linear transformations, and gives rise to
the modular curve $X_0(N)=\Gamma_0(N)\backslash\mathbb{H}^*$, where
$\mathbb{H}^*=\mathbb{H}\cup\mathbb{Q}\cup\{i\infty\}$ \cite[Chapter 1]{Shimura}.
We denote its (meromorphic) function field by $\mathbb{C}(X_0(N))$. Then it is well-known that
$\mathbb{C}(X_0(N))=\mathbb{C}(j(\tau),j(N\tau))$, where
$j(\tau)$ is the elliptic modular function whose Fourier expansion
with respect to $q=e^{2\pi\tau}$
has integer Fourier coefficients as follows:
\begin{equation*}
j(\tau)=1/q+744+196884q+21493760q^2+864299970q^3+\cdots\quad(\tau\in\mathbb{H})
\end{equation*}
\cite[$\S$4.1 and $\S$6.4, Theorem 7]{Lang}.
\par
We define the \textit{Dedekind eta-function} $\eta(\tau)$ by
\begin{equation}\label{eta}
\eta(\tau)=q^{1/24}\prod_{n=1}^\infty(1-q^n)\quad(\tau\in\mathbb{H}),
\end{equation}
and call a product of the form
$\prod_{d|N}\eta(d\tau)^{m_d}$ with $m_d\in\mathbb{Z}$
an \textit{eta-quotient}. Ono suggested the following question \cite[Problem 1.68]{Ono}:
Classify the spaces of modular forms which are generated by eta-quotients.
And, Choi \cite{Choi} recently showed that if $X_0(N)$ is of
genus zero (that is, in the cases $N=1,\ldots,10,12,13,16,18,25$), then every modular
form on $\Gamma_0(N)$ can be expressed as a $\mathbb{C}$-linear combination of certain eta-quotients.
\par
Denote by $\mathcal{R}_0(N)$ the integral closure of the polynomial ring $\mathbb{C}[j(\tau)]$ in the function field $\mathbb{C}(X_0(N))$.
In this paper, we shall first construct explicit
generators of the ring $\mathcal{R}_0(2^n)$ ($n\geq2$) over $\mathbb{C}$
in terms of eta-quotients (Theorem \ref{generators}).
As its corollary we can give an answer to Ono's question when $N=2^n$
(Corollary \ref{sum}).
\par
On the other hand, let $K$ be an imaginary quadratic field of discriminant $d_K$, and set
\begin{equation}\label{tauK}
\tau_K=\left\{\begin{array}{ll}
(-1+\sqrt{d_K})/2 & \textrm{if}~d_K\equiv1\pmod{4},\\
\sqrt{d_K}/2 & \textrm{if}~d_K\equiv0\pmod{4}.
\end{array}\right.
\end{equation}
Let $H_{K,N}$ be the ring class field of the order of conductor $N$ in $K$.
As a consequence of the main theorem of the theory of complex multiplication,
we have
\begin{equation}\label{j(NtauK)}
H_{K,N}=K(j(N\tau_K))
\end{equation}
\cite[$\S$10.3, Theorem 5]{Lang}.
We shall show that if $N\equiv0\pmod{4}$, then
$256\eta(N\tau_K)^8/\eta((N/4)\tau_K)^8$ becomes a primitive generator
of $H_{K,N}$ over $K$ (Theorem \ref{ringclassinvariant}).
\par
For a number field $F$, we denote by $\mathcal{O}_F$ its ring of integers.
We shall further verify the structure of
$\mathcal{O}_{H_{K,2^n}}[1/2]$ ($n\geq3$)
over $\mathcal{O}_{H_{K,4}}[1/2]$  in view of the special values of eta-quotients
generating $\mathcal{R}_0(2^n)$ (Theorem \ref{2integers}). To this end, we shall introduce
an explicit version of Shimura's reciprocity law.

\section {Modular forms and functions}

We shall briefly examine the modularity of eta-quotients, Weierstrass functions
and Siegel functions.

\begin{lemma}\label{modularity}
Let $N$ be a positive integer.
Assume that a family of integers $\{m_d\}_{d|N}$, where $d$ runs over all positive divisors of $N$, satisfies the following conditions:
\begin{itemize}
\item[\textup{(i)}] $\sum_{d|N}m_d$ is even.
\item[\textup{(ii)}] $\sum_{d|N}dm_d\equiv\sum_{d|N}(N/d)m_d\equiv0\pmod{24}$.
\item[\textup{(iii)}] $\prod_{d|N}d^{m_d}$
is a square in $\mathbb{Q}$.
\end{itemize}
Then, the eta-quotient $\prod_{d|N}\eta(d\tau)^{m_d}$
is a meromorphic modular form of
weight $(1/2)\sum_{d|N}m_d$
on $\Gamma_0(N)$ having rational Fourier coefficients with respect to $q$.
\end{lemma}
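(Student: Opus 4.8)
The plan is to reduce the statement to the transformation behaviour of a single factor $\eta(d\tau)$ under $\Gamma_0(N)$ and then to collect the resulting automorphy factors. Write $f(\tau)=\prod_{d\mid N}\eta(d\tau)^{m_d}$ and fix $\gamma=\left[\begin{smallmatrix}a&b\\c&d'\end{smallmatrix}\right]\in\Gamma_0(N)$, so that $N\mid c$. For each divisor $\delta\mid N$ I would first observe that
\[
\delta\cdot\gamma\tau=\frac{a(\delta\tau)+b\delta}{(c/\delta)(\delta\tau)+d'}=\gamma_\delta(\delta\tau),\qquad
\gamma_\delta=\left[\begin{matrix}a&b\delta\\ c/\delta&d'\end{matrix}\right],
\]
and note that $\gamma_\delta\in\mathrm{SL}_2(\mathbb{Z})$: the entry $c/\delta$ is an integer because $\delta\mid N\mid c$, and $\det\gamma_\delta=ad'-bc=1$. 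Thus evaluating $f$ at $\gamma\tau$ is reduced to evaluating each $\eta$ at an honest element of $\mathrm{SL}_2(\mathbb{Z})$ acting on $\delta\tau$.

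Next I would invoke the general transformation law of the Dedekind eta-function: for $\alpha\in\mathrm{SL}_2(\mathbb{Z})$ with positive lower-left entry one has $\eta(\alpha z)=\varepsilon(\alpha)\,(c_\alpha z+d_\alpha)^{1/2}\eta(z)$, where $(c_\alpha,d_\alpha)$ is the lower row of $\alpha$ and $\varepsilon(\alpha)$ is an explicit $24$-th root of unity expressed through a Dedekind sum (for vanishing lower-left entry one uses $\eta(z+1)=e^{\pi i/12}\eta(z)$ directly, and the case of negative lower-left entry is reduced to the positive one via $-I$). Applying this with $\alpha=\gamma_\delta$ and $z=\delta\tau$ gives $\eta(\delta\gamma\tau)=\varepsilon(\gamma_\delta)\,(c\tau+d')^{1/2}\eta(\delta\tau)$, since the lower row of $\gamma_\delta$ is $(c/\delta,d')$ and $(c/\delta)(\delta\tau)+d'=c\tau+d'$. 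Taking the product over $\delta$ weighted by $m_\delta$ yields
\[
f(\gamma\tau)=\Big(\prod_{\delta\mid N}\varepsilon(\gamma_\delta)^{m_\delta}\Big)\,(c\tau+d')^{\frac12\sum_{\delta\mid N}m_\delta}\,f(\tau).
\]
By condition (i) the exponent $\tfrac12\sum_\delta m_\delta$ is an integer, which is the asserted weight; so the whole problem is to prove that the multiplier $\prod_\delta\varepsilon(\gamma_\delta)^{m_\delta}$ equals $1$ for every $\gamma\in\Gamma_0(N)$.

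The main obstacle is exactly this cancellation, and it is where conditions (i)--(iii) are consumed. I would split each $\varepsilon(\gamma_\delta)$ into a Kronecker-symbol part and an exponential (Dedekind-sum) part via the Petersson--Rademacher formula. The symbol parts combine into a single Kronecker symbol whose top entry is governed by $s=\prod_\delta\delta^{m_d}$ together with the parity of the weight; condition (iii), that $s$ be a perfect square in $\mathbb{Q}$, is what forces this symbol to be trivial, the element $-I\in\Gamma_0(N)$ providing the normalization that ties the sign to the parity of the weight (this is where (i) re-enters). The exponential parts collect into $\exp\!\big(\pi i\,\Phi(\gamma)\big)$, where, after applying Dedekind-sum reciprocity to rewrite the sums $s(d',c/\delta)$ in terms of the arithmetic of $c,d'$, the quantity $\Phi(\gamma)$ becomes a linear combination of $\sum_\delta\delta m_\delta$ and $\sum_\delta(N/\delta)m_\delta$ modulo $2$; the two congruences $\equiv0\pmod{24}$ in condition (ii) are precisely what make $\Phi(\gamma)\equiv0\pmod 2$. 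Carrying out this bookkeeping carefully is the technical heart of the proof and the step most prone to sign errors.

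Finally I would record the two remaining, routine points. Meromorphy at the cusps is automatic: each $\eta(\delta\tau)$ is holomorphic and non-vanishing on $\mathbb{H}$, so $f$ is holomorphic and non-vanishing on $\mathbb{H}$, and at every cusp the local expansion of $f$ has a leading fractional $q$-power of finite order, hence at worst a pole. For the rational Fourier coefficients, from $(\ref{eta})$ one has $f(\tau)=q^{\frac1{24}\sum_\delta\delta m_\delta}\prod_{\delta\mid N}\prod_{n\ge1}(1-q^{\delta n})^{m_\delta}$; the double product is a Laurent series in $q$ with integer coefficients, while the first congruence in (ii) makes the prefactor an integral power of $q$, so the $q$-expansion of $f$ has rational (indeed integer) coefficients.
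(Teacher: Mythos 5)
The paper does not actually prove this lemma: its ``proof'' is a one-line citation of Ono's Theorem 1.64 together with the definition of $\eta$. Your proposal instead sketches a direct proof, which is a legitimately different and more self-contained route; the reduction $\delta\gamma\tau=\gamma_\delta(\delta\tau)$ with $\gamma_\delta=\left[\begin{smallmatrix}a&b\delta\\c/\delta&d'\end{smallmatrix}\right]$, the collection of automorphy factors, and the treatment of the cusps and of the rationality (indeed integrality) of the $q$-expansion are all correct and are exactly how the cited theorem is proved. However, the entire content of the statement sits in the one step you do not carry out, namely the verification that $\prod_{\delta\mid N}\varepsilon(\gamma_\delta)^{m_\delta}=1$; describing which hypothesis is ``consumed'' where is a plan, not a proof, so as written the argument has a genuine gap at its center.

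Moreover, the claimed cancellation of the Kronecker-symbol part is not merely unverified but false in the stated generality. The Petersson--Rademacher bookkeeping yields the character $\chi(d')=\left(\frac{(-1)^k s}{d'}\right)$ with $k=\frac12\sum_\delta m_\delta$ and $s=\prod_\delta\delta^{m_\delta}$; condition (iii) kills $\left(\frac{s}{d'}\right)$, but what survives is $\left(\frac{(-1)^k}{d'}\right)$, and condition (i) only makes $k$ an integer, not an even one. Your appeal to $-I_2$ in fact proves the opposite of what you want: since $-I_2\in\Gamma_0(N)$, $f(-I_2\tau)=f(\tau)$ while $(c\tau+d')^k=(-1)^k$, so no nonzero $f$ can transform with trivial multiplier in odd weight. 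A concrete instance is $\eta(\tau)^4\eta(2\tau)^2\eta(4\tau)^4$ with $N=4$: conditions (i)--(iii) all hold ($\sum m_d=10$, $\sum dm_d=\sum(N/d)m_d=24$, $\prod d^{m_d}=1024=32^2$), yet the weight is $5$ and the character is the nontrivial $\left(\frac{-1}{d'}\right)$. So your argument (and, read literally, the lemma itself) is correct only when $\sum_\delta m_\delta\equiv0\pmod 4$; every application in the paper is in even weight, so nothing downstream is affected, but the role of condition (i) should not be overstated as you do.
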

\begin{proof}
See \cite[Theorem 1.64]{Ono} and the definition (\ref{eta}).
\end{proof}

\begin{remark}\label{no}
Every eta-quotient has neither zeros nor poles on $\mathbb{H}$ by the definition (\ref{eta}).
\end{remark}

For a lattice $\Lambda$ in $\mathbb{C}$, the \textit{Weierstrass $\wp$-function} relative to $\Lambda$ is defined by
\begin{equation*}
\wp(z;\Lambda)=\frac{1}{z^2}+\sum_{\omega\in\Lambda\setminus\{0\}}\bigg(\frac{1}{(z-\omega)^2}
-\frac{1}{\omega^2}\bigg)\quad(z\in\mathbb{C}).
\end{equation*}
This is a meromorphic function on $z$ and is periodic with respect to $\Lambda$.
Moreover, for $z_1,z_2\in\mathbb{C}\setminus\Lambda$ we have the assertion
\begin{equation}\label{pequal}
\wp(z_1;\Lambda)=\wp(z_2;\Lambda)
~\Longleftrightarrow~ z_1\equiv\pm z_2\pmod{\Lambda}
\end{equation}
\cite[Chapter IV, $\S$3]{Silverman}.
\par
Let $N\geq2$ and $\mathbf{v}=\left[\begin{matrix}v_1\\v_2\end{matrix}\right]\in
(1/N)\mathbb{Z}^2\setminus\mathbb{Z}^2$. We define
\begin{equation*}
\wp_\mathbf{v}(\tau)=\wp(v_1\tau+v_2;[\tau,1])\quad(\tau\in\mathbb{H}).
\end{equation*}
It is a meromorphic modular
form of weight $2$ on the principal congruence subgroup $\Gamma(N)=\{\gamma\in\mathrm{SL}_2(\mathbb{Z})~|~
\gamma\equiv\pm I_2\pmod{N}\}$. Furthermore, it depends only on $\pm\mathbf{v}\pmod{\mathbb{Z}^2}$ and satisfies the following transformation formula:
If $\alpha=\left[\begin{matrix}a&b\\c&d\end{matrix}\right]\in\mathrm{SL}_2(\mathbb{Z})$, then \begin{equation}\label{transformation}
\wp_\mathbf{v}(\tau)\circ\alpha=
(c\tau+d)^2\wp_{\alpha^T\mathbf{v}}(\tau),
\end{equation}
where $\alpha^T$ stands for the transpose of $\alpha$ \cite[$\S$6.2]{Lang}.

\begin{lemma}\label{plevel}
Let $n\geq2$ and $\mathbf{v}=\left[\begin{matrix}v_1\\v_2\end{matrix}\right]\in(1/2)\mathbb{Z}^2
\setminus\mathbb{Z}^2$.
\begin{itemize}
\item[\textup{(i)}]
If $\alpha=\left[\begin{matrix}a&b\\c&d\end{matrix}\right]\in
\Gamma_0(2^{n-1})$, then we have
\begin{equation*}
\wp_\mathbf{v}(2^{n-1}\tau)\circ\alpha=(c\tau+d)^2\wp_{\left[\begin{smallmatrix}v_1+(c/2^{n-1})v_2\\
v_2\end{smallmatrix}\right]}(2^{n-1}\tau).
\end{equation*}
\item[\textup{(ii)}] $\wp_{\mathbf{v}}(2^{n-1}\tau)$ is a
meromorphic modular form of weight $2$ on $\Gamma_0(2^n)$.
\end{itemize}
\end{lemma}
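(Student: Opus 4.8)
The plan is to deduce both parts from the $\mathrm{SL}_2(\mathbb{Z})$-transformation formula (\ref{transformation}) by absorbing the scaling $\tau\mapsto 2^{n-1}\tau$ into a suitable integral matrix. Throughout I would write $M=2^{n-1}$, which is even because $n\geq2$, and keep in mind that $\wp_\mathbf{v}$ depends only on $\pm\mathbf{v}\pmod{\mathbb{Z}^2}$.

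For (i), the key observation is that for $\alpha=\left[\begin{smallmatrix}a&b\\c&d\end{smallmatrix}\right]\in\Gamma_0(M)$ the integer $c/M$ is well defined, so that
\begin{equation*}
\alpha'=\left[\begin{matrix}a&bM\\c/M&d\end{matrix}\right]
\end{equation*}
has integer entries and determinant $ad-bc=1$, hence $\alpha'\in\mathrm{SL}_2(\mathbb{Z})$. A direct computation shows $M(\alpha\tau)=\alpha'(M\tau)$, so that $\wp_\mathbf{v}(M\tau)\circ\alpha=\wp_\mathbf{v}(\alpha'(M\tau))$. I would then apply (\ref{transformation}) to $\alpha'$ with the argument $M\tau$ in place of $\tau$; since the bottom row $(c/M,d)$ of $\alpha'$ gives $(c/M)(M\tau)+d=c\tau+d$, this produces the automorphy factor $(c\tau+d)^2$ together with the vector $(\alpha')^T\mathbf{v}$.

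It then remains to reduce
\begin{equation*}
(\alpha')^T\mathbf{v}=\left[\begin{matrix}av_1+(c/M)v_2\\bMv_1+dv_2\end{matrix}\right]
\end{equation*}
modulo $\mathbb{Z}^2$. Since $c$ is even and $ad-bc=1$, both $a$ and $d$ are odd; as $\mathbf{v}\in(1/2)\mathbb{Z}^2$ this gives $av_1\equiv v_1$ and $dv_2\equiv v_2\pmod{\mathbb{Z}}$, while $M$ even forces $bMv_1\in\mathbb{Z}$. Hence $(\alpha')^T\mathbf{v}\equiv\left[\begin{smallmatrix}v_1+(c/M)v_2\\v_2\end{smallmatrix}\right]\pmod{\mathbb{Z}^2}$, and the dependence of $\wp_\mathbf{v}$ only on $\pm\mathbf{v}\pmod{\mathbb{Z}^2}$ yields (i).

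Finally, for (ii) I would specialize (i) to $\alpha\in\Gamma_0(2^n)\subseteq\Gamma_0(M)$. Then $c\equiv0\pmod{2^n}$ makes $c/M$ even, so $(c/M)v_2\in\mathbb{Z}$ and the vector appearing in (i) is congruent to $\mathbf{v}$ itself; thus $\wp_\mathbf{v}(M\tau)\circ\alpha=(c\tau+d)^2\wp_\mathbf{v}(M\tau)$, which is exactly the weight-$2$ automorphy under $\Gamma_0(2^n)$. Meromorphy on $\mathbb{H}$ and at the cusps is inherited from $\wp_\mathbf{v}$, which is already a meromorphic modular form of weight $2$ on $\Gamma(2)$, since $\tau\mapsto M\tau$ preserves these properties. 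I expect the only genuine obstacle to be the parity bookkeeping in the reduction modulo $\mathbb{Z}^2$, as this is the single place where the hypotheses $n\geq2$ (so that $M$ is even) and $\mathbf{v}\in(1/2)\mathbb{Z}^2$ are actually used.
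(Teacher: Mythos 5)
Your proposal is correct and follows essentially the same route as the paper: both factor the composite map as $2^{n-1}(\alpha\tau)=\alpha'(2^{n-1}\tau)$ with $\alpha'=\left[\begin{smallmatrix}a&2^{n-1}b\\c/2^{n-1}&d\end{smallmatrix}\right]\in\mathrm{SL}_2(\mathbb{Z})$, apply the transformation formula (\ref{transformation}) to $\alpha'$, and then reduce $(\alpha')^T\mathbf{v}$ modulo $\mathbb{Z}^2$ using the oddness of $a,d$ and the evenness of $2^{n-1}$. Your parity bookkeeping matches the paper's concluding congruence exactly, and part (ii) is deduced in the same way by observing that $c/2^{n-1}$ is even for $\alpha\in\Gamma_0(2^n)$.
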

\begin{proof}
(i) Note that $a,d\equiv1\pmod{2}$. We derive that
\begin{eqnarray*}
\wp_\mathbf{v}(2^{n-1}\tau)\circ\alpha&=&
\wp_\mathbf{v}(2^{n-1}(a\tau+b)/(c\tau+d))\\
&=&\wp_\mathbf{v}(\tau)
\circ\left[\begin{matrix}2^{n-1}a&2^{n-1}b\\c&d\end{matrix}\right]\\
&=&\wp_\mathbf{v}(\tau)\circ\left[\begin{matrix}a & 2^{n-1}b\\c/2^{n-1}&d\end{matrix}\right]
\circ\left[\begin{matrix}2^{n-1}&0\\0&1\end{matrix}\right]\\
&=&(((c/2^{n-1})\tau+d)^2\wp_{\left[\begin{smallmatrix}a & 2^{n-1}b\\c/2^{n-1}&d\end{smallmatrix}\right]^T\mathbf{v}}(\tau))\circ\left[\begin{matrix}2^{n-1}&0\\0&1\end{matrix}\right]
\quad\textrm{by (\ref{transformation})}\\
&=&((c/2^{n-1})(2^{n-1}\tau)+d)^2\wp_{\left[\begin{smallmatrix}av_1+(c/2^{n-1})v_2\\
2^{n-1}bv_1+dv_2\end{smallmatrix}\right]}(2^{n-1}\tau)\\
&=&(c\tau+d)^2\wp_{\left[\begin{smallmatrix}v_1+(c/2^{n-1})v_2\\
v_2\end{smallmatrix}\right]}(2^{n-1}\tau)\\
&&\textrm{because}~
\left[\begin{smallmatrix}av_1+(c/2^{n-1})v_2\\
2^{n-1}bv_1+dv_2\end{smallmatrix}\right]
\equiv
\left[\begin{smallmatrix}v_1+(c/2^{n-1})v_2\\
v_2\end{smallmatrix}\right]\pmod{\mathbb{Z}^2}.
\end{eqnarray*}
(ii) Moreover, let $\left[\begin{matrix}a&b\\c&d\end{matrix}\right]\in\Gamma_0(2^n)$. Since
$c\equiv0\pmod{2^n}$, we get $(c/2^{n-1})v_2\in\mathbb{Z}$.
So we obtain by (i) that
\begin{equation*}
\wp_\mathbf{v}(2^{n-1}\tau)\circ\alpha=(c\tau+d)^2\wp_\mathbf{v}(2^{n-1}\tau).
\end{equation*}
This shows that $\wp_\mathbf{v}(2^{n-1}\tau)$ is a meromorphic modular form of weight $2$ on $\Gamma_0(2^n)$.
\end{proof}

For a positive integer $N$ let $\mathbb{C}(X(N))$ be the function field of the modular curve $X(N)=\Gamma(N)\backslash\mathbb{H}^*$. Then, $\mathbb{C}(X(N))$ is a Galois extension of
$\mathbb{C}(X(1))=\mathbb{C}(X_0(1))$ whose Galois group is naturally isomorphic to
$\overline{\Gamma}(1)/\overline{\Gamma}(N)$, where $\overline{\Gamma}(N)=
\langle\Gamma(N),\pm I_2\rangle/\{\pm I_2\}$ \cite[p.31]{Shimura}.
\par
Let $N\geq2$ and $\mathbf{v}=\left[\begin{matrix}v_1\\v_2\end{matrix}\right]\in(1/N)\mathbb{Z}^2\setminus\mathbb{Z}^2$. We define the \textit{Siegel function} $g_\mathbf{v}(\tau)$ on $\mathbb{H}$ by
\begin{equation}\label{Siegel}
g_\mathbf{v}(\tau)=
-q^{(1/2)(v_1^2-v_1+1/6)}e^{\pi
iv_2(v_1-1)}(1-q^{v_1}e^{2\pi iv_2}) \prod_{n=1}^{\infty}(1-q^{n+v_1}e^{2\pi iv_2})(1-q^{n-v_1}e^{-2\pi iv_2}).
\end{equation}
Then $g_\mathbf{v}(\tau)$ and $g_\mathbf{v}(\tau)^{12N}$ belong to
$\mathbb{C}(X(12N^2))$ and
$\mathbb{C}(X(N))$, respectively \cite[Chapter 3, Theorems 5.2 and 5.3]{K-L}.
Furthermore, if $\mathbf{s}=\left[\begin{matrix}s_1\\s_2\end{matrix}\right]\in\mathbb{Z}^2$,
then $g_\mathbf{v}(\tau)$ satisfies the translation formula
\begin{equation}\label{translation}
g_{\mathbf{v}+\mathbf{s}}(\tau) =(-1)^{s_1s_2+s_1+s_2}e^{-\pi
i(s_1v_2-s_2v_1)}g_\mathbf{v}(\tau)
\end{equation}
\cite[pp.28--29]{K-L}.

\begin{lemma}\label{gtoh}
We have the following relations.
\begin{itemize}
\item[\textup{(i)}]
If $\mathbf{u},\mathbf{v}\in\mathbb{Q}^2\setminus\mathbb{Z}^2$
such that $\mathbf{u}\not\equiv\pm\mathbf{v}\pmod{\mathbb{Z}^2}$, then
we have
\begin{equation*}
\wp_\mathbf{u}(\tau)-\wp_\mathbf{v}(\tau)
=-g_{\mathbf{u}+\mathbf{v}}(\tau)g_{\mathbf{u}-\mathbf{v}}(\tau)\eta(\tau)^4/
g_\mathbf{u}(\tau)^2g_\mathbf{v}(\tau)^2.
\end{equation*}
\item[\textup{(ii)}]
$g_{\left[\begin{smallmatrix}1/2\\0\end{smallmatrix}\right]}
(\tau)g_{\left[\begin{smallmatrix}1/2\\1/2\end{smallmatrix}\right]}
(\tau)g_{\left[\begin{smallmatrix}0\\1/2\end{smallmatrix}\right]}(\tau)=
2e^{\pi i/4}$.
\item[\textup{(iii)}] $g_{\left[\begin{smallmatrix}1/2\\0\end{smallmatrix}\right]}(\tau)=
    -\eta(\tau/2)^2/\eta(\tau)^2$.
\end{itemize}
\end{lemma}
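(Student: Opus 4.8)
The plan is to treat the two explicit evaluations (iii) and (ii) by direct expansion of the product (\ref{Siegel}), and the structural identity (i) through the Klein form. I would begin with (iii): substituting $\mathbf{v}=\left[\begin{smallmatrix}1/2\\0\end{smallmatrix}\right]$ into (\ref{Siegel}), the exponent $(1/2)(v_1^2-v_1+1/6)$ collapses to $-1/24$ and $e^{\pi iv_2(v_1-1)}$ becomes $1$, leaving
\[
g_{\left[\begin{smallmatrix}1/2\\0\end{smallmatrix}\right]}(\tau)=-q^{-1/24}(1-q^{1/2})\prod_{n=1}^\infty(1-q^{n+1/2})(1-q^{n-1/2}).
\]
Regrouping the half-integral powers rewrites the product as $\prod_{m\geq1}(1-q^{m-1/2})^2$, and splitting the defining product of $\eta(\tau/2)$ into its integral and half-integral parts identifies $\prod_{m\geq1}(1-q^{m-1/2})=q^{1/48}\eta(\tau/2)/\eta(\tau)$; once the powers of $q$ cancel this is exactly $-\eta(\tau/2)^2/\eta(\tau)^2$.

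For (i) the key device is the Klein form $\mathfrak{k}_\mathbf{v}(\tau)$ of \cite{K-L}, tied to the Siegel function by $g_\mathbf{v}(\tau)=\eta(\tau)^2\mathfrak{k}_\mathbf{v}(\tau)$ and to the Weierstrass $\sigma$-function relative to $[\tau,1]$ by a Gaussian prefactor whose exponent is a \emph{quadratic} form in the entries of $\mathbf{v}$. Writing $z_\mathbf{w}=w_1\tau+w_2$ and starting from the classical identity
\[
\wp(z_\mathbf{u};[\tau,1])-\wp(z_\mathbf{v};[\tau,1])=-\frac{\sigma(z_\mathbf{u}+z_\mathbf{v})\,\sigma(z_\mathbf{u}-z_\mathbf{v})}{\sigma(z_\mathbf{u})^2\,\sigma(z_\mathbf{v})^2},
\]
I would pass from $\sigma$ to $\mathfrak{k}$: by the parallelogram law the quadratic exponents attached to $\mathbf{u}+\mathbf{v}$, $\mathbf{u}-\mathbf{v}$, $\mathbf{u}$, $\mathbf{v}$ cancel in the ratio $\mathfrak{k}_{\mathbf{u}+\mathbf{v}}\mathfrak{k}_{\mathbf{u}-\mathbf{v}}/(\mathfrak{k}_\mathbf{u}^2\mathfrak{k}_\mathbf{v}^2)$, so that $\wp_\mathbf{u}-\wp_\mathbf{v}=-\mathfrak{k}_{\mathbf{u}+\mathbf{v}}\mathfrak{k}_{\mathbf{u}-\mathbf{v}}/(\mathfrak{k}_\mathbf{u}^2\mathfrak{k}_\mathbf{v}^2)$. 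Substituting $\mathfrak{k}_\bullet=g_\bullet/\eta^2$ collapses the $\eta$-powers to a single factor $\eta^4$ and yields the asserted formula; the hypothesis $\mathbf{u}\not\equiv\pm\mathbf{v}\pmod{\mathbb{Z}^2}$ guarantees through (\ref{pequal}) that the left side, and hence both numerator factors, are nonzero.

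For (ii) I would repeat the computation of (iii) for the other two vectors, obtaining $g_{\left[\begin{smallmatrix}0\\1/2\end{smallmatrix}\right]}(\tau)=2i\,\eta(2\tau)^2/\eta(\tau)^2$ and $g_{\left[\begin{smallmatrix}1/2\\1/2\end{smallmatrix}\right]}(\tau)=-e^{-\pi i/3}\eta((\tau+1)/2)^2/\eta(\tau)^2$. Multiplying these with (iii) leaves $\bigl[\eta(\tau/2)\eta((\tau+1)/2)\eta(2\tau)\bigr]^2/\eta(\tau)^6$ times an accumulated root of unity; the Weber relation $\eta(\tau/2)\eta((\tau+1)/2)\eta(2\tau)=e^{\pi i/24}\eta(\tau)^3$ then reduces the eta-part to $e^{\pi i/12}$, and collecting the prefactors gives $2ie^{-\pi i/4}=2e^{\pi i/4}$.

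I expect the main obstacle to lie in the normalization bookkeeping for (i): one must fix the precise relation among $g_\mathbf{v}$, $\mathfrak{k}_\mathbf{v}$ and $\sigma(z_\mathbf{v})$ and confirm that the quadratic exponential factors cancel without leaving a residual root of unity, since any slip there alters the final constant. The rest—the manipulation of (\ref{Siegel}) in (iii) and (ii) and the tracking of the roots of unity $e^{-\pi i/3}$ and $2i$ in the product—is careful but routine.
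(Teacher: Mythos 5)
Your plan is sound, and your part (iii) is essentially verbatim the computation the paper itself gives. For parts (i) and (ii) the paper supplies no argument at all---it cites \cite[p.51]{K-L} and \cite[Lemma 2.6(ii)]{K-S-Y}---so your reconstructions are genuinely additional content. Your (ii) checks out completely: expanding the definition (\ref{Siegel}) does give $g_{\left[\begin{smallmatrix}0\\1/2\end{smallmatrix}\right]}(\tau)=2i\,\eta(2\tau)^2/\eta(\tau)^2$ and $g_{\left[\begin{smallmatrix}1/2\\1/2\end{smallmatrix}\right]}(\tau)=-e^{-\pi i/3}\eta((\tau+1)/2)^2/\eta(\tau)^2$, and with the Weber relation $\eta(\tau/2)\eta((\tau+1)/2)\eta(2\tau)=e^{\pi i/24}\eta(\tau)^3$ the product collapses to $2ie^{-\pi i/4}=2e^{\pi i/4}$ as claimed.

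The one place you anticipate trouble---the normalization bookkeeping in (i)---is where the genuine gap sits, and it is not merely cosmetic. The classical $\sigma$-identity and the parallelogram cancellation of the quasi-period exponents are both correct as you state them, so the entire constant is carried by the relation between $g_\mathbf{v}$ and $\mathfrak{k}_\mathbf{v}$; in \cite{K-L} this is $g_\mathbf{v}=\mathfrak{k}_\mathbf{v}\Delta^{1/12}$ with $\Delta^{1/12}=2\pi i\,\eta^2$, not $g_\mathbf{v}=\eta^2\mathfrak{k}_\mathbf{v}$, and the ratio $\mathfrak{k}_{\mathbf{u}+\mathbf{v}}\mathfrak{k}_{\mathbf{u}-\mathbf{v}}/\mathfrak{k}_\mathbf{u}^2\mathfrak{k}_\mathbf{v}^2$ then picks up $(2\pi i)^2\eta^4$ rather than $\eta^4$. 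One can see that some such factor is unavoidable by comparing leading Fourier coefficients with the paper's own definitions: for $\mathbf{u}=\left[\begin{smallmatrix}1/2\\0\end{smallmatrix}\right]$ and $\mathbf{v}=\left[\begin{smallmatrix}0\\1/2\end{smallmatrix}\right]$ the left-hand side of (i) tends to $-\pi^2$ as $q\to0$, while the right-hand side as printed tends to $1/4$; the discrepancy is exactly $(2\pi i)^2$. So your argument, carried out with the correct normalization, proves (i) with an extra factor $(2\pi i)^2$ on the right, and a derivation of the identity exactly as printed would require silently dropping the $2\pi i$. This is harmless downstream, since (i) is only ever used in the ratio defining $h_n(\tau)$ in Lemma \ref{htoh}, where any multiplicative constant cancels, but you should either insert the factor $(2\pi i)^2$ or make explicit the convention under which it disappears before calling the proof of (i) complete.
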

\begin{proof}
(i) See \cite[p.51]{K-L}.\\
(ii) See \cite[Lemma 2.6(ii)]{K-S-Y}.\\
(iii) We see that
\begin{eqnarray*}
g_{\left[\begin{smallmatrix}1/2\\0\end{smallmatrix}\right]}(\tau)&=&
-q^{-1/24}(1-q^{1/2})\prod_{n=1}^\infty(1-q^{n+1/2})(1-q^{n-1/2})\quad\textrm{by the definition (\ref{Siegel})}\\
&=&-q^{-1/24}\prod_{n=1}^\infty(1-q^{(1/2)(2n-1)})^2\\
&=&-q^{-1/24}\prod_{n=1}^\infty(1-q^{(1/2)n})^2/\prod_{n=1}^\infty(1-q^{(1/2)(2n)})^2\\
&=&-\eta(\tau/2)^2/\eta(\tau)^2\quad\textrm{by the definition (\ref{eta})}.
\end{eqnarray*}
\end{proof}

\section {Integral closures in modular function fields}

In this section, we shall find explicit generators of $\mathcal{R}_0(2^n)$ over $\mathbb{C}$ by using eta-quotients.
As a consequence of this result, we shall further show that every modular form on $\Gamma_0(2^n)$ can be written as a sum of eta-quotients.

\begin{lemma}\label{closure}
Let $N$ be a positive integer.
The ring $\mathcal{R}_0(N)$ consists of weakly holomorphic \textup{(}that is, holomorphic on $\mathbb{H}$\textup{)} functions in $\mathbb{C}(X_0(N))$.
\end{lemma}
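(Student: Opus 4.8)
The plan is to prove the two inclusions separately, relying on two classical facts. First, $\mathbb{C}(X_0(N))$ is a finite extension of $\mathbb{C}(j(\tau))$, and if $\{\gamma_i\}_{i=1}^m$ is a complete set of right coset representatives of $\Gamma_0(N)$ in $\mathrm{SL}_2(\mathbb{Z})$, then for a $\Gamma_0(N)$-invariant function $f$ the functions $f\circ\gamma_i$ are the images of $f$ under the embeddings of $\mathbb{C}(X_0(N))$ into a Galois closure over $\mathbb{C}(j(\tau))$. Second, any function invariant under $\mathrm{SL}_2(\mathbb{Z})$ that is holomorphic on $\mathbb{H}$ and meromorphic at $i\infty$ belongs to $\mathbb{C}[j(\tau)]$ (cf. \cite{Lang}).

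For the inclusion that every weakly holomorphic function lies in $\mathcal{R}_0(N)$, I would take $f\in\mathbb{C}(X_0(N))$ holomorphic on $\mathbb{H}$ and form the monic polynomial $P(X)=\prod_{i=1}^m(X-f\circ\gamma_i)$. Each $f\circ\gamma_i$ is again holomorphic on $\mathbb{H}$ (composition with $\gamma_i\in\mathrm{SL}_2(\mathbb{Z})$ preserves $\mathbb{H}$) and meromorphic at the cusps. Since right multiplication by any element of $\mathrm{SL}_2(\mathbb{Z})$ merely permutes the cosets $\Gamma_0(N)\gamma_i$, the coefficients of $P(X)$, being elementary symmetric functions of the $f\circ\gamma_i$, are invariant under $\mathrm{SL}_2(\mathbb{Z})$, holomorphic on $\mathbb{H}$, and meromorphic at $i\infty$. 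By the second fact they lie in $\mathbb{C}[j(\tau)]$, so $P(X)\in\mathbb{C}[j(\tau)][X]$ is a monic relation exhibiting $f$ as integral over $\mathbb{C}[j(\tau)]$; as $f\in\mathbb{C}(X_0(N))$, this places $f$ in $\mathcal{R}_0(N)$.

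For the reverse inclusion, I would take $f\in\mathcal{R}_0(N)$ and fix a monic relation $f^m+a_{m-1}f^{m-1}+\cdots+a_0=0$ with $a_k\in\mathbb{C}[j(\tau)]$. Since $j(\tau)$ is holomorphic on $\mathbb{H}$, so is each $a_k$. If $f$ had a pole at some $\tau_0\in\mathbb{H}$, then on a punctured neighborhood of $\tau_0$ we could divide by $f^m$ to obtain $1=-\sum_{k=0}^{m-1}a_k/f^{m-k}$; letting $\tau\to\tau_0$, the right-hand side tends to $0$ because each $a_k$ stays bounded while $f^{m-k}\to\infty$, a contradiction. Hence $f$ is holomorphic on all of $\mathbb{H}$, that is, weakly holomorphic.

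The two inclusions together yield the claim. The only delicate point is the second classical fact used in the first inclusion: one must know that an $\mathrm{SL}_2(\mathbb{Z})$-invariant function holomorphic on $\mathbb{H}$ has at worst a pole at the single cusp of $X_0(1)$, so that after subtracting a suitable polynomial in $j(\tau)$ to cancel that pole one is left with a bounded holomorphic modular function, necessarily constant. This reduction is standard, and the whole argument is essentially a repackaging of the normalization of $\mathbb{C}[j(\tau)]$ along the finite covering $X_0(N)\to X_0(1)$, whose ramification away from the cusps causes no trouble.
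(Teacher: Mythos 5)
Your proposal is correct and follows essentially the same route as the paper: both directions are handled identically, with the integrality relation divided by $f^m$ to rule out poles on $\mathbb{H}$, and the converse obtained by observing that the coefficients of a monic annihilating polynomial (built from the conjugates $f\circ\gamma$) are weakly holomorphic level-one functions and hence polynomials in $j(\tau)$. The only cosmetic difference is that you work with coset representatives and the full product over cosets, while the paper phrases the same computation via the Galois correspondence for $\mathbb{C}(X(N))/\mathbb{C}(X(1))$ and the minimal polynomial.
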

\begin{proof}
Note first that every weakly holomorphic function in $\mathbb{C}(X_0(1))=\mathbb{C}(X(1))$ is
a polynomial in $j(\tau)$ over $\mathbb{C}$ \cite[$\S$5.2, Theorem 2]{Lang}.
\par
Let $h(\tau)\in\mathcal{R}_0(N)$, so $h(\tau)$ satisfies
\begin{equation*}
h(\tau)^m+P_{m-1}(j(\tau))h(\tau)^{m-1}+\cdots+P_0(j(\tau))=0
\end{equation*}
for some $m\geq1$ and $P_{m-1}(X),
\ldots,P_0(X)\in\mathbb{C}[X]$. Dividing both sides by $h(\tau)^m$ we get
\begin{equation}\label{equation}
1+P_{m-1}(j(\tau))(1/h(\tau))+\cdots+P_0(j(\tau))(1/h(\tau))^m=0.
\end{equation}
Suppose that $h(\tau)$ has a pole at a point $\tau_0$ in $\mathbb{H}$, so
$1/h(\tau)$ has a zero at the point.
But, if we insert $\tau=\tau_0$ into (\ref{equation}), then
we get a contradiction $1=0$. Hence $h(\tau)$ must be weakly holomorphic.
\par
Conversely, let $h(\tau)\in\mathbb{C}(X_0(N))$
be weakly holomorphic.
Since $\Gamma(N)\leq\Gamma_0(N)\leq\Gamma(1)$,
$\mathbb{C}(X_0(N))$ is an intermediate field of the extension
$\mathbb{C}(X(N))/\mathbb{C}(X(1))$.
Thus
$h(\tau)\circ\gamma$ for $\gamma\in\Gamma(1)$
represent all the Galois conjugates of $h(\tau)$ over $\mathbb{C}(X(1))=\mathbb{C}(X_0(1))$. It follows that
every coefficient of $\min(h(\tau),\mathbb{C}(X_0(1)))$ is also weakly holomorphic, and hence belongs to $\mathbb{C}[j(\tau)]$. This shows that $h(\tau)\in\mathcal{R}_0(N)$, and completes the proof.
\end{proof}

For each $n\geq3$ we define
\begin{equation}\label{hn}
h_n(\tau)=\frac{\wp_{\left[\begin{smallmatrix}
1/2\\1/2\end{smallmatrix}\right]}(2^{n-1}\tau)-
\wp_{\left[\begin{smallmatrix}
0\\1/2\end{smallmatrix}\right]}(2^{n-1}\tau)}
{\wp_{\left[\begin{smallmatrix}
1/2\\1/2\end{smallmatrix}\right]}(2^{n-2}\tau)-
\wp_{\left[\begin{smallmatrix}
0\\1/2\end{smallmatrix}\right]}(2^{n-2}\tau)}\quad(\tau\in\mathbb{H}).
\end{equation}
It belongs to $\mathbb{C}(X_0(2^n))$ by Lemma \ref{plevel}(ii), and
has neither zeros nor poles on $\mathbb{H}$
by (\ref{pequal}). Hence it is in $\mathcal{R}_0(2^n)^\times$ by Lemma \ref{closure}.

\begin{lemma}\label{htoh}
We have
$h_n(\tau)=\eta(2^{n-2}\tau)^{12}/\eta(2^{n-1}\tau)^4\eta(2^{n-3}\tau)^8$.
\end{lemma}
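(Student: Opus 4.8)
The plan is to express the single difference $\wp_{\left[\begin{smallmatrix}1/2\\1/2\end{smallmatrix}\right]}(\tau)-\wp_{\left[\begin{smallmatrix}0\\1/2\end{smallmatrix}\right]}(\tau)$ as an eta-quotient once and for all, and then to read off $h_n(\tau)$ by substituting $\tau\mapsto2^{n-1}\tau$ in the numerator and $\tau\mapsto2^{n-2}\tau$ in the denominator of (\ref{hn}) and dividing.

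First I would apply Lemma \ref{gtoh}(i) with $\mathbf{u}=\left[\begin{smallmatrix}1/2\\1/2\end{smallmatrix}\right]$ and $\mathbf{v}=\left[\begin{smallmatrix}0\\1/2\end{smallmatrix}\right]$, for which $\mathbf{u}+\mathbf{v}=\left[\begin{smallmatrix}1/2\\1\end{smallmatrix}\right]$ and $\mathbf{u}-\mathbf{v}=\left[\begin{smallmatrix}1/2\\0\end{smallmatrix}\right]$. This yields
\begin{equation*}
\wp_{\left[\begin{smallmatrix}1/2\\1/2\end{smallmatrix}\right]}(\tau)-\wp_{\left[\begin{smallmatrix}0\\1/2\end{smallmatrix}\right]}(\tau)=-\frac{g_{\left[\begin{smallmatrix}1/2\\1\end{smallmatrix}\right]}(\tau)\,g_{\left[\begin{smallmatrix}1/2\\0\end{smallmatrix}\right]}(\tau)\,\eta(\tau)^4}{g_{\left[\begin{smallmatrix}1/2\\1/2\end{smallmatrix}\right]}(\tau)^2\,g_{\left[\begin{smallmatrix}0\\1/2\end{smallmatrix}\right]}(\tau)^2}.
\end{equation*}
Then I would remove the three awkward Siegel factors in turn. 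The translation formula (\ref{translation}) with $\mathbf{s}=\left[\begin{smallmatrix}0\\1\end{smallmatrix}\right]$ gives $g_{\left[\begin{smallmatrix}1/2\\1\end{smallmatrix}\right]}(\tau)=-i\,g_{\left[\begin{smallmatrix}1/2\\0\end{smallmatrix}\right]}(\tau)$, so the numerator collapses to $i\,g_{\left[\begin{smallmatrix}1/2\\0\end{smallmatrix}\right]}(\tau)^2\eta(\tau)^4$. Lemma \ref{gtoh}(ii) lets me replace the denominator $\big(g_{\left[\begin{smallmatrix}1/2\\1/2\end{smallmatrix}\right]}g_{\left[\begin{smallmatrix}0\\1/2\end{smallmatrix}\right]}\big)^2$ by $4i/g_{\left[\begin{smallmatrix}1/2\\0\end{smallmatrix}\right]}(\tau)^2$, after which the factors of $i$ cancel and one is left with $\wp_{\left[\begin{smallmatrix}1/2\\1/2\end{smallmatrix}\right]}(\tau)-\wp_{\left[\begin{smallmatrix}0\\1/2\end{smallmatrix}\right]}(\tau)=g_{\left[\begin{smallmatrix}1/2\\0\end{smallmatrix}\right]}(\tau)^4\eta(\tau)^4/4$. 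Finally Lemma \ref{gtoh}(iii) turns $g_{\left[\begin{smallmatrix}1/2\\0\end{smallmatrix}\right]}(\tau)^4$ into $\eta(\tau/2)^8/\eta(\tau)^8$, giving the clean identity $\wp_{\left[\begin{smallmatrix}1/2\\1/2\end{smallmatrix}\right]}(\tau)-\wp_{\left[\begin{smallmatrix}0\\1/2\end{smallmatrix}\right]}(\tau)=\eta(\tau/2)^8/4\eta(\tau)^4$.

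It then remains to specialize. Writing this formula at $2^{n-1}\tau$ and at $2^{n-2}\tau$ and using $\eta(2^{k}\tau/2)=\eta(2^{k-1}\tau)$, the two constants $1/4$ cancel in the quotient (\ref{hn}) and the surviving powers of $\eta(2^{n-2}\tau)$ combine, producing $\eta(2^{n-2}\tau)^{12}/\eta(2^{n-1}\tau)^4\eta(2^{n-3}\tau)^8$ as claimed. The only delicate point is the bookkeeping of roots of unity: each of the translation formula and Lemma \ref{gtoh}(ii) contributes a phase, and the real content is checking that the factor $-i$ from $g_{\left[\begin{smallmatrix}1/2\\1\end{smallmatrix}\right]}$ and the factor $e^{\pi i/2}=i$ coming from squaring $2e^{\pi i/4}$ cancel exactly, so that the resulting constant is the positive real number $1/4$ rather than a nontrivial root of unity times it. Everything past that is substitution and addition of exponents.
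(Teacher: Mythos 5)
Your proof is correct and follows essentially the same route as the paper: Lemma \ref{gtoh}(i) with $\mathbf{u}=\left[\begin{smallmatrix}1/2\\1/2\end{smallmatrix}\right]$, $\mathbf{v}=\left[\begin{smallmatrix}0\\1/2\end{smallmatrix}\right]$, then the translation formula (\ref{translation}), then Lemma \ref{gtoh}(ii) and (iii). The only (harmless) difference is organizational: you pin down the standalone identity $\wp_{\left[\begin{smallmatrix}1/2\\1/2\end{smallmatrix}\right]}(\tau)-\wp_{\left[\begin{smallmatrix}0\\1/2\end{smallmatrix}\right]}(\tau)=\eta(\tau/2)^8/4\eta(\tau)^4$ with its explicit constant before taking the quotient, whereas the paper works inside the quotient so that the roots of unity and the factor $1/4$ cancel without ever being computed.
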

\begin{proof}
We find that
\begin{eqnarray*}
h_n(\tau)&=&\frac{\wp_{\left[\begin{smallmatrix}
1/2\\1/2\end{smallmatrix}\right]}(2^{n-1}\tau)-
\wp_{\left[\begin{smallmatrix}
0\\1/2\end{smallmatrix}\right]}(2^{n-1}\tau)}
{\wp_{\left[\begin{smallmatrix}
1/2\\1/2\end{smallmatrix}\right]}(2^{n-2}\tau)-
\wp_{\left[\begin{smallmatrix}
0\\1/2\end{smallmatrix}\right]}(2^{n-2}\tau)}\\
&=&
\frac{g_{\left[\begin{smallmatrix}1/2\\1\end{smallmatrix}\right]}(2^{n-1}\tau)
g_{\left[\begin{smallmatrix}1/2\\0\end{smallmatrix}\right]}(2^{n-1}\tau)\eta(2^{n-1}\tau)^4
/g_{\left[\begin{smallmatrix}1/2\\1/2\end{smallmatrix}\right]}(2^{n-1}\tau)^2
g_{\left[\begin{smallmatrix}0\\1/2\end{smallmatrix}\right]}(2^{n-1}\tau)^2}
{g_{\left[\begin{smallmatrix}1/2\\1\end{smallmatrix}\right]}(2^{n-2}\tau)
g_{\left[\begin{smallmatrix}1/2\\0\end{smallmatrix}\right]}(2^{n-2}\tau)\eta(2^{n-2}\tau)^4/
g_{\left[\begin{smallmatrix}1/2\\1/2\end{smallmatrix}\right]}(2^{n-2}\tau)^2
g_{\left[\begin{smallmatrix}0\\1/2\end{smallmatrix}\right]}(2^{n-2}\tau)^2}
\quad\textrm{by Lemma \ref{gtoh}(i)}\\
&=&
\frac{g_{\left[\begin{smallmatrix}1/2\\0\end{smallmatrix}\right]}(2^{n-1}\tau)^2\eta(2^{n-1}\tau)^4
/g_{\left[\begin{smallmatrix}1/2\\1/2\end{smallmatrix}\right]}(2^{n-1}\tau)^2
g_{\left[\begin{smallmatrix}0\\1/2\end{smallmatrix}\right]}(2^{n-1}\tau)^2}
{
g_{\left[\begin{smallmatrix}1/2\\0\end{smallmatrix}\right]}(2^{n-2}\tau)^2\eta(2^{n-2}\tau)^4/
g_{\left[\begin{smallmatrix}1/2\\1/2\end{smallmatrix}\right]}(2^{n-2}\tau)^2
g_{\left[\begin{smallmatrix}0\\1/2\end{smallmatrix}\right]}(2^{n-2}\tau)^2
}
\quad\textrm{by (\ref{translation})}\\
&=&\frac{g_{\left[\begin{smallmatrix}1/2\\0\end{smallmatrix}\right]}(2^{n-1}\tau)^4\eta(2^{n-1}\tau)^4}
{g_{\left[\begin{smallmatrix}1/2\\0\end{smallmatrix}\right]}(2^{n-2}\tau)^4\eta(2^{n-2}\tau)^4}
\quad\textrm{by Lemma \ref{gtoh}(ii)}\\
&=&\frac{\eta(2^{n-2}\tau)^{12}}{\eta(2^{n-1}\tau)^4\eta(2^{n-3}\tau)^8}\quad\textrm{by Lemma \ref{gtoh}(iii)}.
\end{eqnarray*}
\end{proof}

\begin{remark}\label{modremark}
Due to Lemma \ref{htoh}, one can also use Lemma \ref{modularity} to show that
$h_n(\tau)$ belongs to $\mathcal{R}_0(2^n)^\times$ and has rational Fourier coefficients
with respect to $q$.
\end{remark}

\begin{lemma}\label{next}
Let $n\geq3$.
We have
$\mathcal{R}_0(2^n)=\mathcal{R}_0(2^{n-1})[h_n(\tau)]$.
\end{lemma}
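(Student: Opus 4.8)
The plan is to treat the stated equality as a single degree-two step of the tower and to exploit that $h_n(\tau)$ is a modular unit on which the nontrivial automorphism of the extension acts by $-1$. First I would record that $[\mathbb{C}(X_0(2^n)):\mathbb{C}(X_0(2^{n-1}))]=[\Gamma_0(2^{n-1}):\Gamma_0(2^n)]=2$, since the index of $\Gamma_0(2^m)$ in $\mathrm{SL}_2(\mathbb{Z})$ is $3\cdot2^{m-1}$ and the two indices agree with the corresponding indices of the projective groups because $-I_2$ lies in every $\Gamma_0(2^m)$. Being of degree two, the extension is Galois; a representative of the nontrivial coset of $\Gamma_0(2^n)$ in $\Gamma_0(2^{n-1})$ is $\gamma=\left[\begin{smallmatrix}1&0\\2^{n-1}&1\end{smallmatrix}\right]$, and the generator $\sigma$ of the Galois group acts by $f\mapsto f\circ\gamma$.

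The central computation is that $h_n\circ\gamma=-h_n$. I would obtain this by applying Lemma \ref{plevel}(i) to the numerator and denominator of \eqref{hn} separately. For the numerator (at level $2^{n-1}$) the shift is $(c/2^{n-1})v_2=v_2$, which interchanges $\left[\begin{smallmatrix}1/2\\1/2\end{smallmatrix}\right]$ and $\left[\begin{smallmatrix}0\\1/2\end{smallmatrix}\right]$ modulo $\mathbb{Z}^2$, so the numerator is multiplied by $-(2^{n-1}\tau+1)^2$. For the denominator (at level $2^{n-2}$, using the same lemma with $n-1$ replaced by $n-2$, which is legitimate since $\gamma\in\Gamma_0(2^{n-1})\subseteq\Gamma_0(2^{n-2})$) the shift is $(c/2^{n-2})v_2=2v_2$, which fixes both $\left[\begin{smallmatrix}1/2\\1/2\end{smallmatrix}\right]$ and $\left[\begin{smallmatrix}0\\1/2\end{smallmatrix}\right]$ modulo $\mathbb{Z}^2$, so the denominator is merely multiplied by $(2^{n-1}\tau+1)^2$. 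The automorphy factors $(2^{n-1}\tau+1)^2$ cancel in the ratio and only the sign survives, giving $\sigma(h_n)=-h_n$. In particular $h_n\notin\mathbb{C}(X_0(2^{n-1}))$, so $\mathbb{C}(X_0(2^n))=\mathbb{C}(X_0(2^{n-1}))(h_n)$ and $\{1,h_n\}$ is a basis of this extension over $\mathbb{C}(X_0(2^{n-1}))$.

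With this in hand the ring equality follows by a module argument resting only on Lemma \ref{closure}. The inclusion $\mathcal{R}_0(2^{n-1})[h_n]\subseteq\mathcal{R}_0(2^n)$ is clear since $h_n\in\mathcal{R}_0(2^n)^\times$. Conversely, given $f\in\mathcal{R}_0(2^n)$ I would write $f=a+bh_n$ with $a,b\in\mathbb{C}(X_0(2^{n-1}))$, and from $\sigma f=a-bh_n$ solve $a=(f+\sigma f)/2$ and $b=(f-\sigma f)/(2h_n)$. Both $a$ and $b$ are $\sigma$-invariant, hence lie in $\mathbb{C}(X_0(2^{n-1}))$. Moreover $f$ and $\sigma f=f\circ\gamma$ are holomorphic on $\mathbb{H}$, while $h_n$ has neither zeros nor poles on $\mathbb{H}$ by Remark \ref{no}, so $a$ and $b$ are holomorphic on $\mathbb{H}$ as well. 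By Lemma \ref{closure} this forces $a,b\in\mathcal{R}_0(2^{n-1})$, whence $f\in\mathcal{R}_0(2^{n-1})[h_n]$, and the two rings coincide.

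The only genuinely delicate point is the transformation identity $h_n\circ\gamma=-h_n$; everything else is formal. I expect the main obstacle to be the level bookkeeping in that step, where the numerator and denominator are acted on via Lemma \ref{plevel}(i) at two different levels, and I would verify carefully that the two vector shifts behave as claimed modulo $\mathbb{Z}^2$ and that the automorphy factors cancel, since an error there is the easiest way for the argument to fail.
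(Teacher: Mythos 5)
Your proposal is correct and follows essentially the same route as the paper: the key identity $h_n\circ\left[\begin{smallmatrix}1&0\\2^{n-1}&1\end{smallmatrix}\right]=-h_n$ obtained from Lemma \ref{plevel}, the resulting quadratic generation of $\mathbb{C}(X_0(2^n))$ over $\mathbb{C}(X_0(2^{n-1}))$, and the same $2\times2$ linear-system descent combined with Lemma \ref{closure}. The only cosmetic difference is that the paper routes the integrality of the coefficient $c_1$ through the observation $h_n^2\in\mathcal{R}_0(2^{n-1})^\times$, while you argue directly that $(f-\sigma f)/(2h_n)$ is $\sigma$-invariant and weakly holomorphic; these are equivalent.
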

\begin{proof}
Since $h_n(\tau)\in\mathcal{R}_0(2^n)^\times$,
we obviously have $\mathcal{R}_0(2^{n-1})[h_n(\tau)]\subseteq\mathcal{R}_0(2^n)$.
\par
Note that
\begin{equation*}
\mathrm{Gal}(\mathbb{C}(X_0(2^n))/\mathbb{C}(X_0(2^{n-1})))
\simeq\overline{\Gamma}_0(2^{n-1})/
\overline{\Gamma}_0(2^n)=\{I_2,\left[\begin{matrix}1&0\\2^{n-1}&1\end{matrix}\right]\},
\end{equation*}
where $\overline{\Gamma}_0(N)=\langle\Gamma_0(N),\pm I_2\rangle/\{\pm I_2\}$ ($N\geq1$)
\cite[p.31]{Shimura}.
Let $\mathbf{v}=
\left[\begin{matrix}v_1\\v_2\end{matrix}\right]=
\left[\begin{matrix}1/2\\1/2\end{matrix}\right]$ or
$\left[\begin{matrix}0\\1/2\end{matrix}\right]$.
Since $\wp_\mathbf{v}(2^{n-2}\tau)$ is a meromorphic modular forms of weight $2$ on
$\Gamma_0(2^{n-1})$ by Lemma \ref{plevel}(ii), we have
\begin{equation*}
\wp_\mathbf{v}(2^{n-2}\tau)\circ
\left[\begin{matrix}1&0\\2^{n-1}&1\end{matrix}\right]=
(2^{n-1}\tau+1)^2\wp_\mathbf{v}(2^{n-2}\tau).
\end{equation*}
Furthermore, we get by Lemma \ref{plevel}(i) that
\begin{equation*}
\wp_\mathbf{v}(2^{n-1}\tau)\circ\left[\begin{matrix}1&0\\2^{n-1}&1\end{matrix}\right]=
(2^{n-1}\tau+1)^2\wp_{\left[\begin{smallmatrix}v_1+v_2\\v_2\end{smallmatrix}\right]}(2^{n-1}\tau).
\end{equation*}
Thus we obtain
\begin{equation}\label{hconjugate}
h_n(\tau)\circ\left[\begin{matrix}1&0\\2^{n-1}&1\end{matrix}\right]
=-h_n(\tau),
\end{equation}
and hence $\mathbb{C}(X_0(2^n))=\mathbb{C}(X_0(2^{n-1}))(h_n(\tau))$. Furthermore,
the fact $h_n(\tau)\in\mathcal{R}_0(2^n)^\times$ and (\ref{hconjugate}) imply that
\begin{equation}\label{square1}
h_n(\tau)^2\in\mathcal{R}_0(2^{n-1})^\times.
\end{equation}
\par
Now, let $h(\tau)\in\mathcal{R}_0(2^n)$. Since $\mathbb{C}(X_0(2^n))$
is a quadratic extension of
$\mathbb{C}(X_0(2^{n-1}))$ generated by $h_n(\tau)$,
we can express $h(\tau)$ as
\begin{equation}\label{c0c1}
h(\tau)=c_0(\tau)+c_1(\tau)h_n(\tau)~\textrm{for some}~c_0(\tau),c_1(\tau)\in \mathbb{C}(X_0(2^{n-1})).
\end{equation}
Put $h'(\tau)=h(\tau)\circ\left[\begin{matrix}1&0\\2^{n-1}&1\end{matrix}\right]$,
which also lies in $\mathcal{R}_0(2^n)$ by Lemma \ref{closure}. By (\ref{hconjugate}) and (\ref{c0c1}) we get a system
\begin{equation*}
\left[\begin{matrix}
h(\tau)\\h'(\tau)
\end{matrix}\right]=\left[\begin{matrix}
1 & h_n(\tau)\\
1 & -h_n(\tau)\\
\end{matrix}\right]\left[\begin{matrix}c_0(\tau)\\c_1(\tau)
\end{matrix}\right]
\end{equation*}
and find
\begin{equation*}
c_0(\tau)=(h(\tau)+h'(\tau))/2~\textrm{and}~
c_1(\tau)=(h(\tau)h_n(\tau)+h'(\tau)(-h_n(\tau)))/2h_n(\tau)^2,
\end{equation*}
which belong to $\mathcal{R}_0(2^{n-1})$ by (\ref{hconjugate}) and (\ref{square1}).
This shows that $h(\tau)\in\mathcal{R}_0(2^{n-1})[h_n(\tau)]$, and hence
$\mathcal{R}_0(2^n)\subseteq\mathcal{R}_0(2^{n-1})[h_n(\tau)]$.
Therefore we achieve $\mathcal{R}_0(2^n)=
\mathcal{R}_0(2^{n-1})[h_n(\tau)]$, as desired.
\end{proof}

Let $\mathbb{Q}(X_0(N))$ be the subfield of $\mathbb{C}(X_0(N))$ consisting of functions
with rational Fourier coefficients with respect to $q$.
Let
\begin{equation*}
g_{0,4}(\tau)=\eta(4\tau)^8/\eta(\tau)^8,
\end{equation*}
which belongs to $\mathbb{Q}(X_0(4))$ by Lemma \ref{modularity}.

\begin{lemma}\label{level4}
We have the following structures on $\Gamma_0(4)$:
\begin{itemize}
\item[\textup{(i)}]
$\mathbb{C}(X_0(4))=\mathbb{C}(g_{0,4}(\tau))$ and
$\mathbb{Q}(X_0(4))=\mathbb{Q}(g_{0,4}(\tau))$.
\item[\textup{(ii)}]
$\mathcal{R}_0(4)=\mathbb{C}[g_{0,4}(\tau),\eta(2\tau)^{24}/\eta(4\tau)^{16}\eta(\tau)^8,
\eta(4\tau)^{16}\eta(\tau)^8/\eta(2\tau)^{24}]$.
\end{itemize}
\end{lemma}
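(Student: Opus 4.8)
The plan is to analyze everything through the three cusps of $X_0(4)$, namely $0$, $1/2$ and $i\infty$ (of widths $4$, $1$, $1$), together with the divisor of $g_{0,4}$ on them.

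For (i), Lemma \ref{modularity} already guarantees that $g_{0,4}$ is a weight-zero meromorphic modular form, i.e.\ a modular function, on $\Gamma_0(4)$ with rational Fourier coefficients, while Remark \ref{no} shows it is holomorphic and nonvanishing on $\mathbb{H}$; hence its divisor on $X_0(4)$ is supported on the cusps. First I would compute the order of $g_{0,4}$ at each cusp by the standard order formula for eta-quotients (see \cite{Ono}). This should give a simple zero at $i\infty$ (matching the expansion $g_{0,4}=q+O(q^2)$), a simple pole at $0$, and neither a zero nor a pole at $1/2$, the three orders summing to $0$ as required for a modular function. Consequently $g_{0,4}\colon X_0(4)\to\mathbb{P}^1$ has degree $1$, so it is a Hauptmodul and $\mathbb{C}(X_0(4))=\mathbb{C}(g_{0,4})$. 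For the field over $\mathbb{Q}$, since $g_{0,4}$ has rational Fourier coefficients with leading term $q$, any $f\in\mathbb{Q}(X_0(4))\subseteq\mathbb{C}(g_{0,4})$ can be written $f=R(g_{0,4})$ with $R\in\mathbb{C}(X)$, and comparing $q$-expansions forces $R\in\mathbb{Q}(X)$; this yields $\mathbb{Q}(X_0(4))=\mathbb{Q}(g_{0,4})$.

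For (ii), write $A$ for the eta-quotient $\eta(2\tau)^{24}/\eta(4\tau)^{16}\eta(\tau)^8$, so that the third generator is $A^{-1}$. By Lemma \ref{modularity} and Remark \ref{no}, both $A$ and $A^{-1}$ are holomorphic nonvanishing modular functions on $\Gamma_0(4)$, hence units of $\mathcal{R}_0(4)$ by Lemma \ref{closure}; this gives the inclusion $\mathbb{C}[g_{0,4},A,A^{-1}]\subseteq\mathcal{R}_0(4)$. For the reverse inclusion I would use that, by Lemma \ref{closure}, $\mathcal{R}_0(4)$ consists exactly of the functions in $\mathbb{C}(X_0(4))$ whose poles lie among the cusps. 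Identifying $X_0(4)\cong\mathbb{P}^1$ through the Hauptmodul $g:=g_{0,4}$ of (i), under which the cusps $i\infty$, $0$, $1/2$ correspond to $g=0$, $g=\infty$, $g=c_0$ with $c_0:=g_{0,4}(1/2)\in\mathbb{C}^\times$, the ring of functions regular off these three points is $\mathcal{R}_0(4)=\mathbb{C}[g,\,g^{-1},\,(g-c_0)^{-1}]$. It then remains to swap generating sets: computing the divisor of $A$ at the cusps (again by the order formula, cross-checked by $A=q^{-1}+\cdots$ at $i\infty$) should give a simple zero at $1/2$, a simple pole at $i\infty$, and neither a zero nor a pole at $0$; in terms of $g$ this says $A$ is a degree-one rational function of $g$ with a zero at $g=c_0$ and a pole at $g=0$, namely $A=\kappa(g-c_0)/g$ for some $\kappa\in\mathbb{C}^\times$. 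From this relation one solves $g^{-1}=(\kappa c_0)^{-1}(\kappa-A)\in\mathbb{C}[A]$ and then $(g-c_0)^{-1}=\kappa A^{-1}g^{-1}\in\mathbb{C}[A,A^{-1}]$, giving $\mathbb{C}[g,g^{-1},(g-c_0)^{-1}]\subseteq\mathbb{C}[g,A,A^{-1}]$ and hence the desired equality.

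The only genuine work is the two order-at-cusp computations and the identification of $\mathcal{R}_0(4)$ with the coordinate ring of the thrice-punctured projective line; once the divisor of $A$ is in hand, the passage between $\{g,g^{-1},(g-c_0)^{-1}\}$ and $\{g,A,A^{-1}\}$ is elementary. The main point requiring care is the normalization of the order formula relative to the cusp widths $4,1,1$, which is exactly what makes the orders come out as the simple integers used above; I do not expect the explicit value of $c_0$ to be needed beyond the fact that it lies in $\mathbb{C}^\times$.
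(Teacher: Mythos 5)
Your argument is correct, but it is genuinely different from the paper's: the paper proves Lemma \ref{level4} purely by citation (part (i) to [K-S, Table 2 and Lemma 4.1] and [E-K-S, Remark 3.4], part (ii) to [E-K-S, Theorem 3.3(i) and Remark 3.4]), whereas you give a self-contained proof by divisor analysis on the three cusps of $X_0(4)$. Your numerical inputs check out against Ligozat's order formula: with cusps $i\infty$, $0$, $1/2$ of widths $1$, $4$, $1$, the orders of $g_{0,4}$ are $(1,-1,0)$ and those of $A=\eta(2\tau)^{24}/\eta(4\tau)^{16}\eta(\tau)^8$ are $(-1,0,1)$, both summing to zero; this makes $g_{0,4}$ a Hauptmodul and forces $A=\kappa(g-c_0)/g$ with $c_0=g_{0,4}(1/2)\in\mathbb{C}^\times$, from which the translation between $\mathbb{C}[g,g^{-1},(g-c_0)^{-1}]$ and $\mathbb{C}[g,A,A^{-1}]$ is exactly as you say, and the identification of $\mathcal{R}_0(4)$ with the coordinate ring of $\mathbb{P}^1$ minus the three cusps is licensed by Lemma \ref{closure}. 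What your route buys is transparency and independence from [E-K-S]; what it costs is that two steps are stated rather than proved: the rationality claim $\mathbb{Q}(X_0(4))=\mathbb{Q}(g_{0,4})$ deserves one more line (e.g.\ apply an automorphism of $\mathbb{C}$ coefficientwise to the identity $f=R(g_{0,4})$ of formal Laurent series in $q$ to conclude $R=R^\sigma$, hence $R\in\mathbb{Q}(X)$, since $g_{0,4}$ is transcendental over $\mathbb{C}$), and the cusp-order computations should actually be displayed, since they are the entire content of the proof.
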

\begin{proof}
(i) See \cite[Table 2 and Lemma 4.1]{K-S}  and \cite[Remark 3.4]{E-K-S}.\\
(ii) See \cite[Theorem 3.3(i) and Remark 3.4]{E-K-S}.
\end{proof}

\begin{theorem}\label{generators}
Let $n\geq3$. We have
\begin{equation*}
\mathcal{R}_0(2^n)=\mathbb{C}[g_{0,4}(\tau),\eta(2\tau)^{24}/\eta(4\tau)^{16}\eta(\tau)^8,
\eta(4\tau)^{16}\eta(\tau)^8/\eta(2\tau)^{24},h_3(\tau),
\ldots,h_n(\tau)].
\end{equation*}
\end{theorem}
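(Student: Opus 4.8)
The plan is to prove the statement by induction on $n$, using Lemma \ref{next} as the inductive engine and Lemma \ref{level4}(ii) to anchor the base case. Since all the substantive work --- identifying the level-$4$ generators in Lemma \ref{level4} and showing in Lemma \ref{next} that each step up in level is a quadratic extension generated by a single element $h_n(\tau)$ lying in $\mathcal{R}_0(2^n)^\times$ --- has already been carried out in the preceding lemmas, the remaining argument is purely formal bookkeeping of generator lists.

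For the base case $n=3$, I would invoke Lemma \ref{next} with $n=3$ to write $\mathcal{R}_0(8)=\mathcal{R}_0(4)[h_3(\tau)]$, and then substitute the explicit description of $\mathcal{R}_0(4)$ furnished by Lemma \ref{level4}(ii). This immediately yields
\begin{equation*}
\mathcal{R}_0(8)=\mathbb{C}[g_{0,4}(\tau),\eta(2\tau)^{24}/\eta(4\tau)^{16}\eta(\tau)^8,\eta(4\tau)^{16}\eta(\tau)^8/\eta(2\tau)^{24},h_3(\tau)],
\end{equation*}
which is exactly the asserted formula in the case $n=3$ (where the list $h_3(\tau),\ldots,h_n(\tau)$ reduces to $h_3(\tau)$).

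For the inductive step, suppose $n\geq4$ and assume the formula holds at level $2^{n-1}$, that is, $\mathcal{R}_0(2^{n-1})$ is generated over $\mathbb{C}$ by $g_{0,4}(\tau)$, the two reciprocal eta-quotients $\eta(2\tau)^{24}/\eta(4\tau)^{16}\eta(\tau)^8$ and $\eta(4\tau)^{16}\eta(\tau)^8/\eta(2\tau)^{24}$, together with $h_3(\tau),\ldots,h_{n-1}(\tau)$. Applying Lemma \ref{next} gives $\mathcal{R}_0(2^n)=\mathcal{R}_0(2^{n-1})[h_n(\tau)]$, and adjoining $h_n(\tau)$ to the inductive generating set produces precisely the generating set claimed for $\mathcal{R}_0(2^n)$. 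This closes the induction.

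I do not expect any genuine obstacle here: the nontrivial content has already been packaged in Lemmas \ref{next} and \ref{level4}, so the only thing left to observe is that repeatedly adjoining the elements $h_n(\tau)$ accumulates the generators in the stated order. This is routine, and requires no new computation beyond the \emph{closure} and modularity facts established earlier.
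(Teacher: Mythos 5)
Your proof is correct and is exactly the argument the paper intends: the paper's own proof is the one-line remark that the theorem ``follows from Lemmas \ref{next} and \ref{level4}(ii),'' and your induction on $n$ with base case $\mathcal{R}_0(8)=\mathcal{R}_0(4)[h_3(\tau)]$ is just the explicit unpacking of that. No differences in substance.
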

\begin{proof}
This result follows from Lemmas \ref{next} and \ref{level4}(ii).
\end{proof}

\begin{corollary}\label{sum}
Every modular form on $\Gamma_0(2^n)$ \textup{($n\geq2$)}
can be expressed as a sum of eta-quotient.
\end{corollary}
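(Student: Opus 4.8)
The plan is to normalize away the weight: given a weight-$k$ modular form $f$ on $\Gamma_0(2^n)$, I would divide it by an eta-quotient $\psi$ of the same weight that has no zeros or poles on $\mathbb{H}$, so that $f/\psi$ becomes a weakly holomorphic element of $\mathbb{C}(X_0(2^n))$, and then expand $f/\psi$ by Theorem \ref{generators} (or by Lemma \ref{level4}(ii) when $n=2$). First I would dispose of the trivial weights: since $-I_2\in\Gamma_0(2^n)$ gives $f=(-1)^kf$, a nonzero $f$ forces $k$ even; and if $k=0$ then $f$ is a holomorphic function on the compact curve $X_0(2^n)$, hence constant, so $f\in\mathcal{R}_0(2^n)$ already. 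Thus I may assume $k\geq2$ is even.

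Next I would manufacture the normalizing factor. Consider $\psi_0(\tau)=\eta(\tau)^{16}\eta(4\tau)^{16}/\eta(2\tau)^{28}$ and apply Lemma \ref{modularity} with $N=2^n$ and exponents supported on $\{1,2,4\}$: the exponent sum $16-28+16=4$ is even; one has $\sum_d d\,m_d=16-56+64=24\equiv0\pmod{24}$ and $\sum_d(2^n/d)m_d=2^{n-2}(64-56+16)=2^{n-2}\cdot24\equiv0\pmod{24}$; and $\prod_d d^{m_d}=2^{-28}4^{16}=2^4$ is a square. Hence $\psi_0$ is a meromorphic modular form of weight $2$ on $\Gamma_0(2^n)$ with trivial character, and by Remark \ref{no} it is free of zeros and poles on $\mathbb{H}$. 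I would then set $\psi=\psi_0^{k/2}$, an eta-quotient of weight $k$ on $\Gamma_0(2^n)$ that is nonvanishing on $\mathbb{H}$.

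With $\psi$ in hand, $f/\psi$ is a weight-$0$ function on $\Gamma_0(2^n)$, holomorphic on $\mathbb{H}$ because $f$ is holomorphic and $\psi$ is nonvanishing there, so by Lemma \ref{closure} it lies in $\mathcal{R}_0(2^n)$. Invoking Theorem \ref{generators} (resp.\ Lemma \ref{level4}(ii) for $n=2$), I would write $f/\psi=\sum_i c_iE_i$ with $c_i\in\mathbb{C}$ and each $E_i$ a monomial in the displayed generators. Each generator is an eta-quotient of level dividing $2^n$, and a product of such eta-quotients is again an eta-quotient of level dividing $2^n$; hence every $E_i$, and therefore every $\psi E_i$, is an eta-quotient. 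Multiplying through yields $f=\sum_i c_i(\psi E_i)$, exhibiting $f$ as a $\mathbb{C}$-linear combination of eta-quotients, which is the assertion.

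The one step I expect to require genuine care is the construction of $\psi$: for every admissible weight one needs a single eta-quotient on $\Gamma_0(2^n)$ that is modular with \emph{trivial} character, so that $f/\psi$ descends to $X_0(2^n)$, and simultaneously has no zeros or poles on $\mathbb{H}$. The even-weight reduction is exactly what removes the difficulty, since condition (iii) of Lemma \ref{modularity} makes the attached Kronecker character a square and hence trivial once $k$ is even; this is what allows the whole family of weights to be covered by powers of the single weight-$2$ block $\psi_0$.
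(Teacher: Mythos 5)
Your proposal is correct and follows essentially the same route as the paper: both reduce to weight zero by pairing the given form with a power of a fixed nonvanishing eta-quotient (the paper multiplies by $(\eta(2\tau)^4/\eta(4\tau)^8)^k$, you divide by $\psi_0^{k/2}$ with $\psi_0=\eta(\tau)^{16}\eta(4\tau)^{16}/\eta(2\tau)^{28}$), land in $\mathcal{R}_0(2^n)$ via Lemma \ref{closure}, and then expand using Theorem \ref{generators} and Lemma \ref{level4}(ii). Your explicit verification of the even-weight reduction and of the hypotheses of Lemma \ref{modularity} is a welcome bit of extra care, but the argument is the same.
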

\begin{proof}
Let $h(\tau)$ be a modular form of weight $2k$ ($k\geq0$) on $\Gamma_0(2^n)$.
Since $\eta(2\tau)^4/\eta(4\tau)^8$ is a meromorphic modular form of weight $-2$ on $\Gamma_0(4)$
and is weakly holomorphic by Lemma \ref{modularity} and Remark \ref{no}, the product
$h(\tau)(\eta(2\tau)^4/\eta(4\tau)^8)^k$ belongs to $\mathcal{R}_0(2^n)$. Thus it can be
expressed as a sum of eta-quotients by Lemma \ref{level4}(ii) and Theorem \ref{generators}, and hence
$h(\tau)$ itself is a sum of eta-quotients, too.
\end{proof}

\section {Generation of ring class fields}

For a positive integer $N$ let $\mathcal{F}_N$ be the field of
functions in $\mathbb{C}(X(N))$ whose Fourier coefficients with
respect to $q^{1/N}$ lie in $\mathbb{Q}(\zeta_N)$, where $\zeta_N=e^{2\pi i/N}$.
It is well-known that $\mathcal{F}_N$ is a Galois extension of $\mathcal{F}_1=\mathbb{Q}(j(\tau))$
whose Galois group is isomorphic to
\begin{equation*}
\mathrm{GL}_2(\mathbb{Z}/N\mathbb{Z})/\{\pm I_2\}=
\{\left[\begin{matrix}1&0\\0&d\end{matrix}\right]~|~d\in(\mathbb{Z}/N\mathbb{Z})^\times\}
\cdot\mathrm{SL}_2(\mathbb{Z}/N\mathbb{Z})/\{\pm I_2\}.
\end{equation*}
Let $h(\tau)=\sum_{n>-\infty}c_nq^{n/N}$ be a function in $\mathcal{F}_N$.
The matrix $\left[\begin{matrix}1&0\\0&d\end{matrix}\right]$ with $d\in(\mathbb{Z}/N\mathbb{Z})^\times$ acts on
$h(\tau)$ by
\begin{equation}\label{d}
h(\tau)^{\left[\begin{smallmatrix}1&0\\0&d\end{smallmatrix}\right]}=
\sum_{n>-\infty}c_n^{\sigma_d}q^{n/N},
\end{equation}
where $\sigma_d$ is the automorphism of $\mathbb{Q}(\zeta_N)$
given by
$\zeta_N^{\sigma_d}=\zeta_N^d$.
And, $\alpha\in\mathrm{SL}_2(\mathbb{Z}/N\mathbb{Z})/\{\pm I_2\}$ acts on $h(\tau)$ by
\begin{equation}\label{SL}
h(\tau)^\alpha=h(\tau)\circ\widetilde{\alpha},
\end{equation}
where $\widetilde{\alpha}$ is any
preimage of $\alpha$ with respect to the natural reduction
$\mathrm{SL}_2(\mathbb{Z})\rightarrow\mathrm{SL}_2(\mathbb{Z}/N\mathbb{Z})/\{\pm I_2\}$
\cite[$\S$6.3, Theorem 3]{Lang}.

Let $K$ be an imaginary quadratic field with $\tau_K$ as in (\ref{tauK}).
We denote by $K_{N\mathcal{O}_K}$ its ray class field modulo $N\mathcal{O}_K$.
Furthermore, we let $H_K$ be its Hilbert class field and $H_{K,N}$ be the ring class field of the order of conductor $N$ in $K$.
As consequences of the main theorem of the theory of complex multiplication we obtain
\begin{eqnarray}
K_{N\mathcal{O}_K}&=&K(h(\tau_K)~|~h(\tau)\in\mathcal{F}_N~\textrm{is finite at}~\tau_K),
\nonumber\\
H_{N,K}&=&K(h(\tau_K)~|~h(\tau)\in\mathbb{Q}(X_0(N))~\textrm{is finite at}~\tau_K)\label{specialization2}
\end{eqnarray}
(\cite[$\S$10.1, Corollary to Theorem 2]{Lang} and\cite[Theorem 3.4]{K-S}).

\begin{lemma}[Shimura's reciprocity law]\label{Shimura}
Let $\min(\tau_K,\mathbb{Q})=X^2+BX+C$ and
\begin{equation*}
W_{K,N}=\{\gamma=\left[\begin{matrix}t-Bs & -Cs\\s&t\end{matrix}\right]~|~
t,s\in\mathbb{Z}/N\mathbb{Z}~\textrm{such that}~\gamma\in\mathrm{GL}_2(\mathbb{Z}/N\mathbb{Z})\}.
\end{equation*}
\begin{itemize}
\item[\textup{(i)}]
The group $W_{K,N}$
gives rise to the surjection
\begin{eqnarray*}
W_{K,N}&\rightarrow&\mathrm{Gal}(K_{N\mathcal{O}_K}/H_K)\\
\gamma&\mapsto&(h(\tau_K)\mapsto h^\gamma(\tau_K)~|~h(\tau)\in\mathcal{F}_N~
\textrm{is finite at}~\tau_K)
\end{eqnarray*}
whose kernel is
\begin{equation*}
\left\{\begin{array}{ll}
\{\pm I_2,\pm\left[\begin{smallmatrix}0 & -1\\1&0\end{smallmatrix}\right]\} & \textrm{if}~K=\mathbb{Q}(\sqrt{-1}),\\
\{\pm I_2,\pm\left[\begin{smallmatrix}-1 & -1\\1&0\end{smallmatrix}\right],
 \pm\left[\begin{smallmatrix}0 &-1\\1&1\end{smallmatrix}\right]\}& \textrm{if}~K=\mathbb{Q}(\sqrt{-3}),\\
\{\pm I_2\} & \textrm{otherwise}.
\end{array}\right.
\end{equation*}
\item[\textup{(ii)}] The subgroup $\{tI_2~|~t\in(\mathbb{Z}/N\mathbb{Z})^\times\}$
of $W_{K,N}$ gives rise to the isomorphism
\begin{eqnarray*}
\{tI_2~|~t\in(\mathbb{Z}/N\mathbb{Z})^\times\}/\{\pm I_2\}&\rightarrow&\mathrm{Gal}(K_{N\mathcal{O}_K}/H_{K,N})\\
tI_2&\mapsto&(h(\tau_K)\mapsto h^{tI_2}(\tau_K)~|~h(\tau)\in\mathcal{F}_N~\textrm{is finite at}~
\tau_K).
\end{eqnarray*}
\end{itemize}
\end{lemma}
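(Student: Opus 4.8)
The plan is to deduce both assertions from the general form of Shimura's reciprocity law at $\tau_K$, as in \cite[$\S$10.1, Theorem 2 and its Corollary]{Lang}, and then to make the matrices and the kernels explicit by direct computation. The key preliminary observation is that, for $\tau_K$ chosen as in (\ref{tauK}), one has $\mathcal{O}_K=\mathbb{Z}[\tau_K]$; hence $\{1,\tau_K\}$ reduces to a $(\mathbb{Z}/N\mathbb{Z})$-basis of $\mathcal{O}_K/N\mathcal{O}_K$. Writing $\tau_K^2=-B\tau_K-C$, multiplication by $t+s\tau_K$ in this basis, after transposing as dictated by the action $\alpha\mapsto\alpha^T$ appearing in (\ref{transformation}), is represented precisely by $\gamma=\left[\begin{smallmatrix}t-Bs&-Cs\\s&t\end{smallmatrix}\right]$. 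Thus $W_{K,N}$ is exactly the image of $(\mathcal{O}_K/N\mathcal{O}_K)^\times$ under this (transposed) regular representation, which in particular shows that $W_{K,N}$ is a subgroup of $\mathrm{GL}_2(\mathbb{Z}/N\mathbb{Z})$.

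For part (i), I would invoke the main theorem of complex multiplication in the form asserting that, for $h(\tau)\in\mathcal{F}_N$ finite at $\tau_K$, the automorphism in $\mathrm{Gal}(K_{N\mathcal{O}_K}/H_K)$ attached to a class in $(\mathcal{O}_K/N\mathcal{O}_K)^\times$ sends $h(\tau_K)$ to $h^\gamma(\tau_K)$, where $\gamma\in W_{K,N}$ is the associated matrix and $h\mapsto h^\gamma$ is the action described in (\ref{d}) and (\ref{SL}). Surjectivity onto $\mathrm{Gal}(K_{N\mathcal{O}_K}/H_K)$ then follows from the class-field-theoretic exact sequence $\mathcal{O}_K^\times\rightarrow(\mathcal{O}_K/N\mathcal{O}_K)^\times\rightarrow\mathrm{Gal}(K_{N\mathcal{O}_K}/H_K)\rightarrow1$, whose kernel is the image of the unit group $\mathcal{O}_K^\times$. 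It then remains to translate units into matrices: in general $\mathcal{O}_K^\times=\{\pm1\}$ maps to $\{\pm I_2\}$; for $K=\mathbb{Q}(\sqrt{-1})$ one has $B=0$, $C=1$, and the unit $\sqrt{-1}=\tau_K$ gives $\left[\begin{smallmatrix}0&-1\\1&0\end{smallmatrix}\right]$; for $K=\mathbb{Q}(\sqrt{-3})$ one has $B=C=1$, and the units $\tau_K$ and $\tau_K^2=-1-\tau_K$ give $\left[\begin{smallmatrix}-1&-1\\1&0\end{smallmatrix}\right]$ and $\left[\begin{smallmatrix}0&-1\\1&1\end{smallmatrix}\right]$ up to sign. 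This reproduces the three listed kernels exactly.

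For part (ii), I would use that the ring class field $H_{K,N}$ of the order $\mathbb{Z}+N\mathcal{O}_K$ lies between $H_K$ and $K_{N\mathcal{O}_K}$, and that its fixing group $\mathrm{Gal}(K_{N\mathcal{O}_K}/H_{K,N})$ corresponds, under the identification in part (i), to the classes coming from $(\mathbb{Z}/N\mathbb{Z})^\times$, that is, to the scalar matrices $\{tI_2\}$; this is the standard description of the ring class group as the quotient of the ray class group by the rational classes, compatible with (\ref{j(NtauK)}) and (\ref{specialization2}). Restricting the surjection of part (i) to $\{tI_2\}$ is therefore onto $\mathrm{Gal}(K_{N\mathcal{O}_K}/H_{K,N})$, with kernel $\{tI_2\}\cap\mathrm{im}(\mathcal{O}_K^\times)$. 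Since the nontrivial units of the two exceptional fields yield the non-scalar matrices computed above, the only units giving scalar matrices are $\pm1$, so this intersection is $\{\pm I_2\}$; passing to the quotient gives the asserted isomorphism $\{tI_2\}/\{\pm I_2\}\cong\mathrm{Gal}(K_{N\mathcal{O}_K}/H_{K,N})$.

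The main difficulty I anticipate is bookkeeping rather than conceptual: fixing the correct convention --- the transpose forced by (\ref{transformation}) together with the chosen ordering of $\{1,\tau_K\}$ --- so that the regular representation of $(\mathcal{O}_K/N\mathcal{O}_K)^\times$ lands on $W_{K,N}$ and the three unit computations reproduce the stated kernels verbatim. A secondary point needing care is checking that the property of being finite at $\tau_K$ is preserved under the action $h\mapsto h^\gamma$, so that the specializations in (\ref{specialization2}) remain well defined throughout.
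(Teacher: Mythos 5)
The paper offers no argument of its own here: it proves both parts by citation ([Shimura, Theorem 6.31 and Proposition 6.34], [Stevenhagen, $\S$3] for (i), and [K-S2, Proposition 3.8] for (ii)), and your sketch is a correct reconstruction of exactly the argument those references contain --- the regular representation of $(\mathcal{O}_K/N\mathcal{O}_K)^\times$ on the basis $\{\tau_K,1\}$ landing on $W_{K,N}$, the exact sequence $\mathcal{O}_K^\times\rightarrow(\mathcal{O}_K/N\mathcal{O}_K)^\times\rightarrow\mathrm{Gal}(K_{N\mathcal{O}_K}/H_K)\rightarrow1$, the unit computations for $\mathbb{Q}(\sqrt{-1})$ and $\mathbb{Q}(\sqrt{-3})$, and the identification of $\mathrm{Gal}(K_{N\mathcal{O}_K}/H_{K,N})$ with the rational (scalar) classes. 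So your proposal is correct and takes essentially the same route as the paper's sources; no further comparison is needed.
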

\begin{proof}
(i) See \cite[Theorem 6.31 and Proposition 6.34]{Shimura} and \cite[$\S$3]{Stevenhagen}.\\
(ii) See \cite[Proposition 3.8]{K-S2}.
\end{proof}

\begin{lemma}\label{degree}
Let $\mathcal{O}=[N\tau_K,1]$ be the order of conductor $N$ in $K$.
We have the degree formula
\begin{equation*}
[H_{K,N}:K]=\frac{[H_K:K]N}{[\mathcal{O}_K^\times:\mathcal{O}^\times]}
\prod_{p|N}\bigg(1-\bigg(\frac{d_K}{p}\bigg)\frac{1}{p}\bigg),
\end{equation*}
where
\begin{equation*}
\bigg(\frac{d_K}{p}\bigg)=\left\{\begin{array}{ll}
\textrm{the Legendre symbol} & \textrm{if $p$ is an odd prime},\\
\textrm{the Kronecker symbol} & \textrm{if}~p=2.
\end{array}\right.
\end{equation*}
\end{lemma}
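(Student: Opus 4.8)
The plan is to exploit the tower $K\subseteq H_K\subseteq H_{K,N}\subseteq K_{N\mathcal{O}_K}$ together with both parts of Shimura's reciprocity law (Lemma \ref{Shimura}), computing $[H_{K,N}:H_K]$ as a ratio of orders of explicit matrix groups and then multiplying by $[H_K:K]$. Indeed, Lemma \ref{Shimura}(i) identifies $\mathrm{Gal}(K_{N\mathcal{O}_K}/H_K)$ with $W_{K,N}/\ker$, so $[K_{N\mathcal{O}_K}:H_K]=|W_{K,N}|/|\ker|$, while Lemma \ref{Shimura}(ii) gives directly $[K_{N\mathcal{O}_K}:H_{K,N}]=\varphi(N)/|\{\pm I_2\bmod N\}|$, the subgroup $\{\pm I_2\bmod N\}$ having order $1$ or $2$ according as $N\le 2$ or $N\ge 3$. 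Dividing, I obtain
\begin{equation*}
[H_{K,N}:H_K]=\frac{[K_{N\mathcal{O}_K}:H_K]}{[K_{N\mathcal{O}_K}:H_{K,N}]}
=\frac{|W_{K,N}|\cdot|\{\pm I_2\bmod N\}|}{|\ker|\cdot\varphi(N)}.
\end{equation*}

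First I would compute $|W_{K,N}|$. Since $\tau_K$ generates $\mathcal{O}_K=[\tau_K,1]$ over $\mathbb{Z}$ and $\min(\tau_K,\mathbb{Q})=X^2+BX+C$, sending $t+s\tau_K$ to multiplication by $t+s\tau_K$ in the basis $\{\tau_K,1\}$ identifies $W_{K,N}$ with $(\mathcal{O}_K/N\mathcal{O}_K)^\times$; here the determinant of the matrix is exactly the norm $t^2-Bst+Cs^2$ reduced modulo $N$, so invertibility of the matrix matches unit-ness of $t+s\tau_K$. By the Chinese remainder theorem it then suffices to count units in $\mathcal{O}_K/p^k\mathcal{O}_K$ for each prime power $p^k\,\|\,N$, separating the split, inert and ramified cases; in all three one finds $|(\mathcal{O}_K/p^k\mathcal{O}_K)^\times|=p^{2k}(1-1/p)(1-(d_K/p)/p)$, whence
\begin{equation*}
|W_{K,N}|=N^2\prod_{p|N}\Big(1-\frac1p\Big)\Big(1-\Big(\frac{d_K}{p}\Big)\frac1p\Big).
\end{equation*}

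Next I would pin down $|\ker|$. The explicit kernel listed in Lemma \ref{Shimura}(i) is, in each of the three cases, precisely the image of $\mathcal{O}_K^\times$ under the identification above (for instance $\pm i\mapsto\pm\left[\begin{smallmatrix}0&-1\\1&0\end{smallmatrix}\right]$ when $K=\mathbb{Q}(\sqrt{-1})$), so $|\ker|=|\mathrm{im}(\mathcal{O}_K^\times\to(\mathcal{O}_K/N\mathcal{O}_K)^\times)|$, while $|\{\pm I_2\bmod N\}|=|\mathrm{im}(\mathcal{O}^\times\to(\mathcal{O}_K/N\mathcal{O}_K)^\times)|$ because $\mathcal{O}^\times=\{\pm1\}$. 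The key point is that the reduction kernel $\ker(\mathcal{O}_K^\times\to(\mathcal{O}_K/N\mathcal{O}_K)^\times)$ is contained in $\mathcal{O}^\times$: a unit $u\equiv1\pmod{N\mathcal{O}_K}$ lies in $1+N\mathcal{O}_K\subseteq\mathbb{Z}+N\mathcal{O}_K=\mathcal{O}$, and the same holds for $\bar u=\pm u^{-1}$, so $u\in\mathcal{O}_K^\times\cap\mathcal{O}=\mathcal{O}^\times$. Hence both reduction maps have the same kernel, giving
\begin{equation*}
\frac{|\{\pm I_2\bmod N\}|}{|\ker|}=\frac{|\mathcal{O}^\times|}{|\mathcal{O}_K^\times|}=\frac{1}{[\mathcal{O}_K^\times:\mathcal{O}^\times]}.
\end{equation*}
Substituting this and the value of $|W_{K,N}|$ into the first display, the factor $\varphi(N)=N\prod_{p|N}(1-1/p)$ cancels the corresponding factors of $|W_{K,N}|$, leaving $[H_{K,N}:H_K]=(N/[\mathcal{O}_K^\times:\mathcal{O}^\times])\prod_{p|N}(1-(d_K/p)/p)$, and multiplying by $[H_K:K]$ yields the claim.

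The main obstacle I anticipate is the uniform bookkeeping of the scalar/unit overlap: the order of $\{\pm I_2\bmod N\}$, the size of $\ker$, and the index $[\mathcal{O}_K^\times:\mathcal{O}^\times]$ each vary with the exceptional fields $\mathbb{Q}(\sqrt{-1})$, $\mathbb{Q}(\sqrt{-3})$ and with the small moduli $N=1,2$ (where $-I_2=I_2$). The conductor argument above is exactly what makes these compensations automatic, so that no separate small-case analysis is needed; I would be careful to verify that the identification $\ker=\mathrm{im}(\mathcal{O}_K^\times)$ genuinely matches the explicit matrix lists in Lemma \ref{Shimura}(i), since that is the step silently encoding the extra roots of unity.
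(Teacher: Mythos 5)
Your argument is correct, but it takes a genuinely different route from the paper: the paper's entire proof is a citation of Cox, \emph{Primes of the form $x^2+ny^2$}, Theorem 7.24, i.e.\ the classical class-number formula for orders, proved there via the exact sequence relating $\mathrm{Pic}(\mathcal{O})$ to $\mathrm{Pic}(\mathcal{O}_K)$ and $(\mathcal{O}_K/N\mathcal{O}_K)^\times/\mathrm{im}(\mathbb{Z}/N\mathbb{Z})^\times\mathrm{im}(\mathcal{O}_K^\times)$. You instead rederive the formula from the paper's own Lemma \ref{Shimura}, reading off $[K_{N\mathcal{O}_K}:H_K]$ and $[K_{N\mathcal{O}_K}:H_{K,N}]$ as orders of explicit matrix groups and dividing. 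The ingredients are sound: the determinant of $\left[\begin{smallmatrix}t-Bs&-Cs\\s&t\end{smallmatrix}\right]$ is indeed the norm of $t+s\tau_K$, so $W_{K,N}\simeq(\mathcal{O}_K/N\mathcal{O}_K)^\times$ (using that $\mathcal{O}_K=[\tau_K,1]$ for $\tau_K$ as in (\ref{tauK})); the split/inert/ramified count gives the stated order; and your conductor argument showing that the reduction maps from $\mathcal{O}_K^\times$ and $\mathcal{O}^\times$ have the same kernel is exactly what makes the exceptional fields $\mathbb{Q}(\sqrt{-1})$, $\mathbb{Q}(\sqrt{-3})$ and the moduli $N\le2$ come out uniformly. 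The two points you rightly flag as needing care are real but harmless: the kernel lists in Lemma \ref{Shimura}(i) must be read as subsets of $\mathrm{GL}_2(\mathbb{Z}/N\mathbb{Z})$ (so entries may coincide for small $N$), and they are precisely the image of $\mathcal{O}_K^\times$ under $t+s\tau_K\mapsto\left[\begin{smallmatrix}t-Bs&-Cs\\s&t\end{smallmatrix}\right]$. What your approach buys is a proof self-contained within the paper's toolkit (and implicitly a Galois-theoretic explanation of the formula); what the citation buys is brevity and independence from the precise normalization of Shimura's reciprocity law --- in substance, though, your unit-counting is the same computation that underlies Cox's proof, repackaged through Galois groups.
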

\begin{proof}
See \cite[Theorem 7.24]{Cox}.
\end{proof}

\begin{lemma}\label{jg}
Let $N\equiv0\pmod{4}$.
There exist polynomials $A(X),B(X)\in\mathbb{Q}[X]$ satisfying
\begin{itemize}
\item[\textup{(i)}] $j(N\tau)=A(g_{0,4}((N/4)\tau))/B(g_{0,4}((N/4)\tau))$,
\item[\textup{(ii)}] $B(g_{0,4}((N/4)\tau_K))\neq0$.
\end{itemize}
\end{lemma}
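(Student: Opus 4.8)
The plan is to first establish the rational relation at level $4$ and then rescale the variable via $\tau\mapsto(N/4)\tau$. Throughout, write $M=N/4$, which is a positive integer since $N\equiv0\pmod4$.

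First I would observe that $j(4\tau)$ lies in $\mathbb{Q}(X_0(4))$: it belongs to $\mathbb{C}(X_0(4))=\mathbb{C}(j(\tau),j(4\tau))$, and its Fourier expansion in $q$ is obtained from that of $j(\tau)$ by replacing $q$ with $q^4$, hence has integer --- in particular rational --- coefficients. By Lemma \ref{level4}(i) we have $\mathbb{Q}(X_0(4))=\mathbb{Q}(g_{0,4}(\tau))$, so $j(4\tau)=R(g_{0,4}(\tau))$ for some $R\in\mathbb{Q}(X)$. Writing $R=A/B$ in lowest terms with coprime $A,B\in\mathbb{Q}[X]$ gives $j(4\tau)=A(g_{0,4}(\tau))/B(g_{0,4}(\tau))$, and substituting $\tau\mapsto M\tau$ yields (i).

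For (ii) I would argue by contradiction, exploiting two elementary finiteness facts. Since $\tau_K\in\mathbb{H}$ and $M\geq1$, both $M\tau_K$ and $N\tau_K$ lie in $\mathbb{H}$; by Remark \ref{no} the eta-quotient $g_{0,4}$ has no pole on $\mathbb{H}$, so $x_0:=g_{0,4}(M\tau_K)$ is a finite complex number, while $j$ is holomorphic on $\mathbb{H}$, so $j(N\tau_K)$ is finite (indeed it is the generator of $H_{K,N}$ over $K$ appearing in (\ref{j(NtauK)})). Now suppose $B(x_0)=0$. As $A$ and $B$ are coprime in $\mathbb{Q}[X]$, hence in $\mathbb{C}[X]$, a B\'ezout relation $AU+BV=1$ shows they share no root in $\mathbb{C}$, so $A(x_0)\neq0$. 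Then the right-hand side of (i) has a pole at $\tau=\tau_K$, whereas its left-hand side $j(N\tau_K)$ is finite --- a contradiction. Hence $B(x_0)\neq0$, which is (ii).

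The one point requiring care is insisting that $A$ and $B$ be coprime: this is precisely what guarantees that a zero of the denominator at $x_0$ forces a genuine pole of the quotient, and thereby converts the finiteness of $j(N\tau_K)$ into the non-vanishing of $B(x_0)$. Beyond this the argument is routine, resting only on Lemma \ref{level4}(i), Remark \ref{no}, and the holomorphy of $j$ on $\mathbb{H}$, so I anticipate no substantial obstacle.
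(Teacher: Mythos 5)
Your proposal is correct, and part (i) is identical to the paper's argument (write $j(4\tau)=A(g_{0,4}(\tau))/B(g_{0,4}(\tau))$ with $A,B$ coprime in $\mathbb{Q}[X]$, then substitute $\tau\mapsto(N/4)\tau$). For part (ii) you reach the same conclusion by a slightly more elementary route: you use B\'ezout over $\mathbb{C}$ to deduce $A(x_0)\neq0$ directly from $B(x_0)=0$, and then contradict the finiteness of $j(N\tau_K)$. The paper instead first concludes $A(x_0)=0$ from the finiteness of $j(N\tau)$, and then derives the contradiction by noting that $\min(x_0,\mathbb{Q})$ would divide both $A$ and $B$; this requires knowing that $x_0=g_{0,4}((N/4)\tau_K)$ is algebraic, which the paper establishes by showing $g_{0,4}((N/4)\tau)\in\mathbb{Q}(X_0(N))$ is weakly holomorphic and invoking (\ref{specialization2}) to get $g_{0,4}((N/4)\tau_K)\in H_{K,N}$. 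Your version avoids that algebraicity input entirely, which is a genuine (if small) simplification for this lemma; the only thing you lose is that the paper's proof also records the containment (\ref{g(NtauK)}), which is reused in the proof of Theorem \ref{ringclassinvariant}, so that fact would still need to be established separately.
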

\begin{proof}
Since
$j(4\tau)\in\mathbb{Q}(X_0(4))$ and $\mathbb{Q}(X_0(4))=\mathbb{Q}(g_{0,4}(\tau))$
by Lemma \ref{level4}(i),
we can express $j(4\tau)$ as
$j(4\tau)=A(g_{0,4}(\tau))/B(g_{0,4}(\tau))$
for some relatively prime polynomials $A(X),B(X)\in\mathbb{Q}[X]$.
It follows that $j(N\tau)=A(g_{0,4}((N/4)\tau))/B(g_{0,4}((N/4)\tau))$.
\par
On the other hand, one can readily check by Lemma
\ref{modularity} and Remark \ref{no} that the function
\begin{equation*}
g_{0,4}((N/4)\tau)=\eta(N\tau)^8/\eta((N/4)\tau)^8
\end{equation*}
belongs to $\mathbb{Q}(X_0(N))$ and is weakly holomorphic. Thus
we obtain by (\ref{specialization2}) that
\begin{equation}\label{g(NtauK)}
g_{0,4}((N/4)\tau_K)\in H_{K,N}.
\end{equation}
\par
Suppose that $B(g_{0,4}((N/4)\tau_K))=0$.
Since $j(N\tau)$ is weakly holomorphic, we must have $A(g_{0,4}((N/4)\tau_K))=0$.
But this implies that
$\min(g_{0,4}((N/4)\tau_K),\mathbb{Q})$ divides both $A(X)$ and $B(X)$,
which yields a contradiction.
Therefore $B(g_{0,4}((N/4)\tau_K))$ is nonzero.
\end{proof}

\begin{lemma}\label{integrality}
If $M$ is a positive integer and $\tau_0\in\mathbb{H}$ is an imaginary quadratic
argument, then the special value
$M\eta(M\tau_0)^2/\eta(\tau_0)^2$ is an algebraic integer dividing $M$.
\end{lemma}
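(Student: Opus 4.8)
The plan is to reduce the statement to an integrality assertion for the modular discriminant and then to use that singular moduli are algebraic integers. Write $\Delta(\tau)=\eta(\tau)^{24}$ and set $\psi(\tau)=M\eta(M\tau)^2/\eta(\tau)^2$, so that
\[
\psi(\tau)^{12}=M^{12}\frac{\Delta(M\tau)}{\Delta(\tau)},\qquad
\Big(\frac{M}{\psi(\tau)}\Big)^{12}=\Big(\frac{\eta(\tau)^2}{\eta(M\tau)^2}\Big)^{12}=\frac{\Delta(\tau)}{\Delta(M\tau)}.
\]
Since a twelfth root of an algebraic integer is again an algebraic integer (the ring $\overline{\mathbb{Z}}$ of all algebraic integers being integrally closed in $\overline{\mathbb{Q}}$), it suffices to prove that the two special values $F(\tau_0):=M^{12}\Delta(M\tau_0)/\Delta(\tau_0)$ and $G(\tau_0):=\Delta(\tau_0)/\Delta(M\tau_0)$ are algebraic integers. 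The first then yields $\psi(\tau_0)\in\overline{\mathbb{Z}}$, and the second yields $M/\psi(\tau_0)=\eta(\tau_0)^2/\eta(M\tau_0)^2\in\overline{\mathbb{Z}}$; together these say precisely that $\psi(\tau_0)$ is an algebraic integer dividing $M$.

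Because $\tau_0$ is an imaginary quadratic argument, $j(\tau_0)$ is a singular modulus and hence an algebraic integer. Thus it is enough to show that, as functions of $\tau$, both
\[
F(\tau)=M^{12}\frac{\Delta(M\tau)}{\Delta(\tau)}=M^{12}\frac{\eta(M\tau)^{24}}{\eta(\tau)^{24}}\quad\text{and}\quad G(\tau)=\frac{\Delta(\tau)}{\Delta(M\tau)}=\frac{\eta(\tau)^{24}}{\eta(M\tau)^{24}}
\]
are integral over $\mathbb{Z}[j(\tau)]$; evaluating a monic equation at $\tau_0$ then exhibits $F(\tau_0),G(\tau_0)$ as integral over $\mathbb{Z}[j(\tau_0)]\subseteq\overline{\mathbb{Z}}$. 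Note that $F$ and $G$ are weight-$0$ meromorphic modular forms on $\Gamma_0(M)$ with rational Fourier coefficients by Lemma \ref{modularity}, and they are holomorphic and nonvanishing on $\mathbb{H}$ by the product formula (\ref{eta}); in particular they lie in $\mathbb{Q}(X_0(M))$.

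To establish integrality over $\mathbb{Z}[j]$ I would form the monic polynomials $\prod_{\gamma}(X-F\circ\gamma)$ and $\prod_{\gamma}(X-G\circ\gamma)$, where $\gamma$ runs over coset representatives of $\Gamma_0(M)\backslash\Gamma(1)$; these annihilate $F$ and $G$, and their coefficients are the elementary symmetric functions of the Galois conjugates over $\mathbb{C}(j)=\mathbb{C}(X_0(1))$. Using the transformation law $\Delta(-1/\tau)=\tau^{12}\Delta(\tau)$ one checks that each conjugate is of the shape $M^{\pm12}\,\Delta((a\tau+b)/d)/\Delta(\tau)$ with $ad=M$, and that the expansion of $\Delta$ at every cusp has leading coefficient a root of unity and all further Fourier coefficients (in the local parameter $q^{1/M}$) lying in $\mathbb{Z}[\zeta_M]$. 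The powers of $M$ are arranged exactly so that every conjugate of $F$, and every conjugate of $G$, has a $q^{1/M}$-expansion with algebraic-integer coefficients. Consequently the symmetric functions above are $\Gamma(1)$-invariant functions, holomorphic on $\mathbb{H}$, whose Fourier coefficients are simultaneously algebraic integers and (being conjugation-invariant, and since $F,G$ have rational $q$-expansions) rational, hence rational integers; by the $q$-expansion principle each such function lies in $\mathbb{Z}[j]$. Thus $F$ and $G$ satisfy monic polynomials over $\mathbb{Z}[j]$, as required.

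The hard part is the cusp bookkeeping in this last step. One must verify, uniformly in $M$, that the factor $M^{12}$ in $F$ (and, by contrast, the absence of any extra factor in $G$) is precisely what clears the denominators introduced at the cusps with $d\mid M$, $d>1$, where the weight-$12$ automorphy factor $(c\tau+d)^{12}$ contributes a power of $M$; this is why $\eta(M\tau_0)^2/\eta(\tau_0)^2$ is in general \emph{not} a unit, and the correcting factor $M$ is genuinely needed. Concretely, one must track the leading behavior of $\Delta((a\tau+b)/d)/\Delta(\tau)$ over the full set of cusps of $\Gamma_0(M)$, together with the roots of unity $\zeta_M$ occurring in the expansions, and confirm that the scaled conjugates remain algebraically integral. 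This is a finite explicit computation governed entirely by the transformation formula for $\Delta$, after which the conclusion follows.
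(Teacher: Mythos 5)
The paper proves this lemma by citation alone, pointing to Lang, \emph{Elliptic Functions}, \S 12.2, Theorem 4, which asserts precisely that $M^{12}\Delta(M\tau_0)/\Delta(\tau_0)$ is an algebraic integer dividing $M^{12}$; the stated form follows by extracting twelfth roots exactly as you do. Your sketch is essentially Lang's own proof of that theorem, and the cusp bookkeeping you defer does close up: the conjugates of $M^{12}\Delta(M\tau)/\Delta(\tau)$ over $\mathbb{C}(j)$ are $a^{12}\Delta((a\tau+b)/d)/\Delta(\tau)$ with $ad=M$, $0\le b<d$ (not $M^{\pm 12}$ times the quotient, but the conclusion is unaffected), whose expansions have coefficients in $\mathbb{Z}[\zeta_d]$, while the conjugates of the reciprocal are $d^{12}\Delta(\tau)/\Delta((a\tau+b)/d)$, again with coefficients in $\mathbb{Z}[\zeta_d]$, so the elementary symmetric functions land in $\mathbb{Z}[j]$ by the $q$-expansion principle as you claim.
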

\begin{proof}
See \cite[$\S$12.2, Theorem 4]{Lang}.
\end{proof}

\begin{theorem}\label{ringclassinvariant}
Let $N\equiv0\pmod{4}$.
Then the special value $256\eta(N\tau_K)^8/\eta((N/4)\tau_K)^8$ generates $H_{K,N}$ over $K$
as a real algebraic integer.
\end{theorem}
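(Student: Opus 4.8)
The plan is to establish three things about $256\,F(\tau_K)$, where I abbreviate $F(\tau)=g_{0,4}((N/4)\tau)=\eta(N\tau)^8/\eta((N/4)\tau)^8$: that it generates $H_{K,N}$ over $K$, that it is an algebraic integer, and that it is real. Recall from the proof of Lemma \ref{jg} that $F(\tau)$ lies in $\mathbb{Q}(X_0(N))$ and is weakly holomorphic. For generation I would combine Lemma \ref{jg} with (\ref{g(NtauK)}) and (\ref{j(NtauK)}): Lemma \ref{jg} gives $j(N\tau_K)=A(F(\tau_K))/B(F(\tau_K))$ with $B(F(\tau_K))\neq0$, so $j(N\tau_K)\in K(F(\tau_K))$ and hence $K(j(N\tau_K))\subseteq K(F(\tau_K))$; conversely (\ref{g(NtauK)}) yields $F(\tau_K)\in H_{K,N}=K(j(N\tau_K))$ by (\ref{j(NtauK)}), whence $K(F(\tau_K))\subseteq K(j(N\tau_K))$. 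Therefore $H_{K,N}=K(F(\tau_K))=K(256\,F(\tau_K))$, the last equality because scaling by the nonzero constant $256$ does not alter the generated field.

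For the integrality I would apply Lemma \ref{integrality} with $M=4$ and $\tau_0=(N/4)\tau_K$, which is again an imaginary quadratic argument in $\mathbb{H}$. Since $M\tau_0=N\tau_K$, the value $4\eta(N\tau_K)^2/\eta((N/4)\tau_K)^2$ is an algebraic integer (indeed one dividing $4$). Raising it to the fourth power and using $4^4=256$ shows that $256\eta(N\tau_K)^8/\eta((N/4)\tau_K)^8=\big(4\eta(N\tau_K)^2/\eta((N/4)\tau_K)^2\big)^4$ is an algebraic integer; here the factor $256$ is tailored precisely to this fourth-power trick.

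For reality I would use that $F(\tau)$ has rational (hence real) Fourier coefficients with respect to $q=e^{2\pi i\tau}$, and in fact expands as a power series in $q^{N/4}$ because $N/4$ is an integer; consequently $\overline{F(\tau_K)}=F(-\overline{\tau_K})$. I then examine the two shapes of $\tau_K$ in (\ref{tauK}). If $d_K\equiv0\pmod4$, then $\tau_K=\sqrt{d_K}/2$ is purely imaginary, so $-\overline{\tau_K}=\tau_K$ and $\overline{F(\tau_K)}=F(\tau_K)$ at once. If $d_K\equiv1\pmod4$, then $\tau_K=(-1+\sqrt{d_K})/2$ and $-\overline{\tau_K}=\tau_K+1$; since $F$ is a power series in $q^{N/4}$ with $N/4\in\mathbb{Z}$, it is invariant under $\tau\mapsto\tau+1$, so $\overline{F(\tau_K)}=F(\tau_K+1)=F(\tau_K)$. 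In either case $F(\tau_K)$, and therefore $256\,F(\tau_K)$, is real.

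The main obstacle is the generation step, which carries the substantive content: it rests entirely on Lemma \ref{jg}, namely on writing $j(N\tau)$ as a rational function of $F(\tau)=g_{0,4}((N/4)\tau)$ with a denominator not vanishing at $\tau_K$, together with the complex-multiplication identification (\ref{j(NtauK)}). Once generation is secured, the integrality is a direct invocation of Lemma \ref{integrality} and the reality is a short case analysis on the explicit form of $\tau_K$, both essentially routine.
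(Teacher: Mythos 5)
Your proposal is correct and follows essentially the same route as the paper: the chain of field inclusions via Lemma \ref{jg}, (\ref{g(NtauK)}) and (\ref{j(NtauK)}) for generation, and Lemma \ref{integrality} with $M=4$, $\tau_0=(N/4)\tau_K$ for integrality. The only difference is that you also supply an explicit verification that the value is real (via the rational $q$-expansion and the two shapes of $\tau_K$), a point the paper asserts without proof; that argument is correct and a worthwhile addition.
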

\begin{proof}
Let $A(X)$ and $B(X)$ be polynomials in $\mathbb{Q}[X]$ satisfying (i) and (ii) in Lemma \ref{jg}. We deduce that
\begin{eqnarray*}
H_{K,N}&=&K(j(N\tau_K))\quad\textrm{by (\ref{j(NtauK)})}\\
&=& K(A(g_{0,4}((N/4)\tau_K))/B(g_{0,4}((N/4)\tau_K)))\quad\textrm{by Lemma \ref{jg}}\\
&\subseteq&K(g_{0,4}((N/4)\tau_K))\quad\textrm{because $A(X),B(X)\in\mathbb{Q}[X]$}\\
&\subseteq&H_{K,N}\quad\textrm{by (\ref{g(NtauK)})}.
\end{eqnarray*}
Therefore we achieve $H_{N,K}=K(g_{0,4}((N/4)\tau_K))=K(\eta(N\tau_K)^8/\eta((N/4)\tau_K)^8)$.
Moreover, $256\eta(N\tau_K)^8/\eta((N/4)\tau_K)^8$ is an algebraic integer by Lemma
\ref{integrality} with $M=4$ and $\tau_0=(N/4)\tau_K$.
\end{proof}

\begin{remark}
Since $256\eta(N\tau_K)^8/\eta((N/4)\tau_K)^8$
is a real algebraic integer, its minimal polynomial over $K$ has integer coefficients
\cite[Remark 4.9]{K-S}.
\end{remark}

\begin{example}
Consider the case when $K=\mathbb{Q}(\sqrt{-7})$ and $N=12$.
Let $h(\tau)=256\eta(12\tau)^8/
\eta(3\tau)^8$.
Then the special value
$h(\tau_K)$ generates $H_{K,N}$ over $K$
as a real algebraic integer
by Theorem \ref{ringclassinvariant}.
\par
We have $H_K=K$ \cite[Theorem 12.34]{Cox} and
find by Lemma \ref{Shimura} that
\begin{eqnarray*}
\mathrm{Gal}(H_{K,N}/H_K)&\simeq&\{
\left[\begin{matrix}1&0\\0&1\end{matrix}\right]
\left[\begin{matrix}1&0\\0&1\end{matrix}\right],
\left[\begin{matrix}1&0\\0&7\end{matrix}\right]
\left[\begin{matrix}-1&-4\\2&7\end{matrix}\right],
\left[\begin{matrix}1&0\\0&5\end{matrix}\right]
\left[\begin{matrix}-3&4\\-4&5\end{matrix}\right],
\left[\begin{matrix}1&0\\0&7\end{matrix}\right]
\left[\begin{matrix}1&0\\6&1\end{matrix}\right],\\
&&
\left[\begin{matrix}1&0\\0&1\end{matrix}\right]
\left[\begin{matrix}5&8\\8&13\end{matrix}\right],
\left[\begin{matrix}1&0\\0&11\end{matrix}\right]
\left[\begin{matrix}-9&4\\2&-1\end{matrix}\right],
\left[\begin{matrix}1&0\\0&11\end{matrix}\right]
\left[\begin{matrix}1&-4\\-2&9\end{matrix}\right],
\left[\begin{matrix}1&0\\0&5\end{matrix}\right]
\left[\begin{matrix}11&4\\8&3\end{matrix}\right]\}.
\end{eqnarray*}
Now, due to Lemma \ref{Shimura}, (\ref{d}) and (\ref{SL}) one can compute
(by using Maple ver.15)
\begin{eqnarray*}
\min(h(\tau_K),K)&=&
(X-h\circ\left[\begin{matrix}1&0\\0&1\end{matrix}\right](\tau_K))
(X-h\circ\left[\begin{matrix}-1&-4\\2&7\end{matrix}\right](\tau_K))
(X-h\circ\left[\begin{matrix}-3&4\\-4&5\end{matrix}\right](\tau_K))\\
&&(X-h\circ\left[\begin{matrix}1&0\\6&1\end{matrix}\right](\tau_K))
(X-h\circ\left[\begin{matrix}5&8\\8&13\end{matrix}\right](\tau_K))
(X-h\circ\left[\begin{matrix}-9&4\\2&-1\end{matrix}\right](\tau_K))\\
&&(X-h\circ\left[\begin{matrix}1&-4\\-2&9\end{matrix}\right](\tau_K))
(X-h\circ\left[\begin{matrix}11&4\\8&3\end{matrix}\right](\tau_K))
\\
&=&
X^8+64X^7+2365X^6+5617X^5+1025614X^4+13744576X^3\\
&&+99275140X^2+263731264X+1.
\end{eqnarray*}
\end{example}

\begin{theorem}\label{2integers}
Let $n\geq3$.
We have
\begin{equation*}
\mathcal{O}_{H_{K,2^n}}[1/2]=
\mathcal{O}_{H_{K,4}}[1/2,h_3(\tau_K),\ldots,h_n(\tau_K)].
\end{equation*}
\end{theorem}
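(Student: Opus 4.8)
The plan is to prove the statement by induction on $n\geq3$, reducing it to the single descent step
\[
\mathcal{O}_{H_{K,2^n}}[1/2]=\mathcal{O}_{H_{K,2^{n-1}}}[1/2]\,[h_n(\tau_K)],
\]
which mirrors, at the level of $2$-integers, the function-field identity $\mathcal{R}_0(2^n)=\mathcal{R}_0(2^{n-1})[h_n(\tau)]$ of Lemma \ref{next}. Granting this step, the inductive hypothesis $\mathcal{O}_{H_{K,2^{n-1}}}[1/2]=\mathcal{O}_{H_{K,4}}[1/2,h_3(\tau_K),\ldots,h_{n-1}(\tau_K)]$ (the case $n=3$ serving as the base, where the right-hand side is just $\mathcal{O}_{H_{K,4}}[1/2]$) yields the theorem at once.

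First I would assemble three preliminary facts. (a) By Lemma \ref{degree}, the Euler factor at $p=2$ and the unit index $[\mathcal{O}_K^\times:\mathcal{O}^\times]$ agree for the orders of conductor $2^n$ and $2^{n-1}$ once $n\geq3$ (both conductors being at least $4$), so the degree formula gives $[H_{K,2^n}:H_{K,2^{n-1}}]=2^n/2^{n-1}=2$. (b) Rewriting the eta-quotient of Lemma \ref{htoh} as
\[
h_n(\tau)=\frac{(2\eta(2^{n-2}\tau)^2/\eta(2^{n-3}\tau)^2)^4}{4\,(2\eta(2^{n-1}\tau)^2/\eta(2^{n-2}\tau)^2)^2},
\]
I can apply Lemma \ref{integrality} with $M=2$ to each factor $2\eta(2\cdot2^{n-3}\tau_K)^2/\eta(2^{n-3}\tau_K)^2$ and $2\eta(2\cdot2^{n-2}\tau_K)^2/\eta(2^{n-2}\tau_K)^2$; each is an algebraic integer dividing $2$, hence a unit at every prime not lying above $2$. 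Thus both $h_n(\tau_K)$ and $1/h_n(\tau_K)$ are integral away from $2$, and since $h_n\in\mathbb{Q}(X_0(2^n))$ is finite at $\tau_K$ (Remark \ref{modremark}), (\ref{specialization2}) places $h_n(\tau_K)$ in $\mathcal{O}_{H_{K,2^n}}[1/2]^\times$. (c) By (\ref{square1}) and Remark \ref{modremark}, $h_n(\tau)^2\in\mathbb{Q}(X_0(2^{n-1}))$ is weakly holomorphic, hence finite at $\tau_K$, so (\ref{specialization2}) gives $h_n(\tau_K)^2\in H_{K,2^{n-1}}$.

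The crux is to show that the nontrivial automorphism $\sigma\in\mathrm{Gal}(H_{K,2^n}/H_{K,2^{n-1}})$ satisfies $\sigma(h_n(\tau_K))=-h_n(\tau_K)$, which simultaneously confirms $h_n(\tau_K)\notin H_{K,2^{n-1}}$ and hence $H_{K,2^n}=H_{K,2^{n-1}}(h_n(\tau_K))$. My plan is to transport the function-field relation (\ref{hconjugate}), namely $h_n(\tau)\circ\left[\begin{smallmatrix}1&0\\2^{n-1}&1\end{smallmatrix}\right]=-h_n(\tau)$, to the arithmetic setting through Shimura's reciprocity law (Lemma \ref{Shimura}). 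Since $h_n$ has rational Fourier coefficients, the diagonal part $\left[\begin{smallmatrix}1&0\\0&d\end{smallmatrix}\right]$ of any $\gamma\in W_{K,2^n}$ acts trivially by (\ref{d}), so (\ref{SL}) gives $h_n^\gamma(\tau_K)=(h_n\circ\gamma')(\tau_K)$ for an $\mathrm{SL}_2$-lift $\gamma'$, and because $h_n\in\mathbb{C}(X_0(2^n))$ this depends only on the class of $\gamma'$ in $\Gamma_0(2^n)\backslash\mathrm{SL}_2(\mathbb{Z}/2^n\mathbb{Z})$. The hard part — and what forces the introduction of an explicit form of Shimura's reciprocity law — is to exhibit a $\gamma\in W_{K,2^n}$ whose $\mathrm{SL}_2$-part reduces into the coset $\Gamma_0(2^n)\left[\begin{smallmatrix}1&0\\2^{n-1}&1\end{smallmatrix}\right]$. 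Such a $\gamma$ automatically fixes $H_{K,2^{n-1}}$, since $\left[\begin{smallmatrix}1&0\\2^{n-1}&1\end{smallmatrix}\right]\equiv I_2\pmod{2^{n-1}}$ forces $\gamma'\bmod 2^{n-1}$ into $\Gamma_0(2^{n-1})$, while it sends $h_n(\tau_K)$ to $-h_n(\tau_K)\neq h_n(\tau_K)$. Granting fact (c), the restriction of $\gamma$ is then exactly $\sigma$, acting as $-1$.

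Finally I would run the descent. Let $\xi\in\mathcal{O}_{H_{K,2^n}}[1/2]$ and write $\xi=c_0+c_1h_n(\tau_K)$ with $c_0,c_1\in H_{K,2^{n-1}}$, using that $\{1,h_n(\tau_K)\}$ is an $H_{K,2^{n-1}}$-basis. Applying $\sigma$ gives $\sigma(\xi)=c_0-c_1h_n(\tau_K)$, whence $c_0=(\xi+\sigma(\xi))/2$ and $c_1=(\xi-\sigma(\xi))/(2h_n(\tau_K))$. As $\sigma$ fixes $2$ and permutes the primes above $2$, it preserves $\mathcal{O}_{H_{K,2^n}}[1/2]$, so $\xi\pm\sigma(\xi)\in\mathcal{O}_{H_{K,2^n}}[1/2]$; since $2$ is invertible and $h_n(\tau_K)$ is a unit there by (b), both $c_0$ and $c_1$ lie in $\mathcal{O}_{H_{K,2^n}}[1/2]$. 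Being fixed by $\sigma$, they lie in $\mathcal{O}_{H_{K,2^n}}[1/2]\cap H_{K,2^{n-1}}=\mathcal{O}_{H_{K,2^{n-1}}}[1/2]$, giving $\xi\in\mathcal{O}_{H_{K,2^{n-1}}}[1/2][h_n(\tau_K)]$; the reverse inclusion is immediate from (b). This completes the descent step and the induction. I expect the genuinely delicate point to be the explicit Shimura-reciprocity computation of the third paragraph, whereas the integrality bookkeeping and the final descent are formal once $h_n(\tau_K)$ is known to be a $2$-unit with conjugate $-h_n(\tau_K)$.
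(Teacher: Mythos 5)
Your proposal follows essentially the same route as the paper: the $2$-unit property of $h_n(\tau_K)$ via Lemma \ref{integrality}, the degree computation $[H_{K,2^n}:H_{K,2^{n-1}}]=2$ via Lemma \ref{degree}, the sign flip $\sigma(h_n(\tau_K))=-h_n(\tau_K)$ via Shimura reciprocity, and the $2\times2$ linear-algebra descent. The one step you defer as ``the hard part'' --- exhibiting $\gamma\in W_{K,2^n}$ inducing $\sigma$ whose $\mathrm{SL}_2$-part lies outside $\Gamma_0(2^n)$ --- is carried out in the paper by taking $t=1$, $s=2^{n-1}$, i.e.\ $\gamma=\left[\begin{smallmatrix}1-B2^{n-1}&-C2^{n-1}\\ 2^{n-1}&1\end{smallmatrix}\right]$, whose $\mathrm{SL}_2$-part has lower-left entry $\equiv 2^{n-1}\pmod{2^n}$ and hence negates $h_n$, verified there directly with Lemma \ref{plevel} rather than through your coset reduction to (\ref{hconjugate}); both verifications are equivalent and routine.
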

\begin{proof}
For each $m\geq3$ we see by Lemma \ref{htoh}
\begin{equation*}
h_m(\tau)=(\eta(2^{m-2}\tau)/\eta(2^{m-1}\tau))^4(\eta(2^{m-2}\tau)/\eta(2^{m-3}\tau))^8.
\end{equation*}
Since $h_m(\tau)\in\mathcal{R}_0(2^m)^\times\cap\mathbb{Q}(X_0(2^m))$ by Remark \ref{modremark}, we obtain
by (\ref{specialization2})
and Lemma \ref{integrality} that
\begin{equation}\label{1/2in}
h_m(\tau_K)\in(\mathcal{O}_{H_{K,2^m}}[1/2])^\times.
\end{equation}
So we deduce the inclusion
\begin{equation*}
\mathcal{O}_{H_{K,2^n}}\supseteq\mathcal{O}_{H_{K,4}}[1/2,h_3(\tau_K),\ldots,h_n(\tau_K)].
\end{equation*}
\par
On the other hand, we see by Lemma \ref{degree} that $[H_{K,2^m}:H_{K,2^{m-1}}]=2$,
and find by Lemma \ref{Shimura} that
\begin{equation*}
\mathrm{Gal}(H_{K,2^m}/H_{K,2^{m-1}})\simeq\{I_2,\left[\begin{matrix}1-B2^{m-1} & -C2^{m-1}\\
2^{m-1} & 1\end{matrix}\right]\},
\end{equation*}
where $\min(\tau_K,\mathbb{Q})=X^2+BX+C$.
Decompose $\left[\begin{matrix}1-B2^{m-1} & -C2^{m-1}\\
2^{m-1} & 1\end{matrix}\right]\in\mathrm{GL}_2(\mathbb{Z}/2^m\mathbb{Z})$ into
\begin{equation*}
\left[\begin{matrix}1-B2^{m-1} & -C2^{m-1}\\
2^{m-1} & 1\end{matrix}\right]\equiv
\left[\begin{matrix}1&0\\0&1-B2^{m-1}\end{matrix}\right]
\left[\begin{matrix}
1-B2^{m-1}+a2^m & -C2^{m-1}+b2^m\\
2^{m-1}+c2^m & 1-B2^{m-1}+d2^m
\end{matrix}\right]\pmod{2^m}
\end{equation*}
for some $a,b,c,d\in\mathbb{Z}$ in order that $\left[\begin{matrix}
1-B2^{m-1}+a2^m & -C2^{m-1}+b2^m\\
2^{m-1}+c2^m & 1-B2^{m-1}+d2^m
\end{matrix}\right]\in\mathrm{SL}_2(\mathbb{Z})$.
We derive that
\begin{eqnarray*}
&&h_m(\tau_K)^{\left[\begin{smallmatrix}1-B2^{m-1} & -C2^{m-1}\\
2^{m-1} & 1\end{smallmatrix}\right]}\\
&=&h_m(\tau)^{\left[\begin{smallmatrix}1 & 0\\
0 & 1-B2^{m-1}\end{smallmatrix}\right]
\left[\begin{smallmatrix}
1-B2^{m-1}+a2^m & -C2^{m-1}+b2^m\\
2^{m-1}+c2^m & 1-B2^{m-1}+d2^m
\end{smallmatrix}\right]}(\tau_K)\quad\textrm{by Lemma \ref{Shimura}(i)}\\
&=&h_m(\tau)^{\left[\begin{smallmatrix}
1-B2^{m-1}+a2^m & -C2^{m-1}+b2^m\\
2^{m-1}+c2^m & 1-B2^{m-1}+d2^m
\end{smallmatrix}\right]
}(\tau_K)\quad\textrm{by the fact $h_m(\tau)\in\mathbb{Q}(X_0(2^m))$ and (\ref{d})}\\
&=&
\frac{\wp_{\left[\begin{smallmatrix}
1/2\\1/2\end{smallmatrix}\right]}(2^{m-1}\tau)-
\wp_{\left[\begin{smallmatrix}
0\\1/2\end{smallmatrix}\right]}(2^{m-1}\tau)}
{\wp_{\left[\begin{smallmatrix}
1/2\\1/2\end{smallmatrix}\right]}(2^{m-2}\tau)-
\wp_{\left[\begin{smallmatrix}
0\\1/2\end{smallmatrix}\right]}(2^{m-2}\tau)}\circ
\left[\begin{matrix}
1-B2^{m-1}+a2^m & -C2^{m-1}+b2^m\\
2^{m-1}+c2^m & 1-B2^{m-1}+d2^m
\end{matrix}\right]
(\tau_K)\\
&&\quad\textrm{by the definition (\ref{hn}) and (\ref{SL})}\\
&=&\frac{\wp_{\left[\begin{smallmatrix}
0\\1/2\end{smallmatrix}\right]}(2^{m-1}\tau_K)-
\wp_{\left[\begin{smallmatrix}
1/2\\1/2\end{smallmatrix}\right]}(2^{m-1}\tau_K)}
{\wp_{\left[\begin{smallmatrix}
1/2\\1/2\end{smallmatrix}\right]}(2^{m-2}\tau_K)-
\wp_{\left[\begin{smallmatrix}
0\\1/2\end{smallmatrix}\right]}(2^{m-2}\tau_K)}\quad\textrm{by Lemma \ref{plevel}}\\
&=&-h_m(\tau_K).
\end{eqnarray*}
This shows that
$H_{K,2^m}=H_{K,2^{m-1}}(h_m(\tau_K))$, and
$-h_m(\tau_K)$ is another Galois conjugate of $h_m(\tau_K)$ over $H_{K,2^{m-1}}$.
Thus we get by (\ref{1/2in}) that
\begin{equation}\label{square2}
h_m(\tau_K)^2\in(\mathcal{O}_{H_{K,2^{m-1}}}[1/2])^\times.
\end{equation}
\par
Now, let $x$ be an element of $\mathcal{O}_{H_{K,2^m}}[1/2]$
and $x'\in\mathcal{O}_{H_{K,2^m}}[1/2]$ be its Galois conjugate (not necessarily distinct).
Express $x$ as $x=c_0+c_1h_m(\tau_K)$ for some $c_0,c_1\in H_{K,2^{m-1}}$.
Since $x'=c_0+c_1(-h_m(\tau_K))$, we get a system
\begin{equation*}
\left[\begin{matrix}x\\x'\end{matrix}\right]=\left[\begin{matrix}
1 & h_m(\tau_K)\\
1 & -h_m(\tau_K)
\end{matrix}\right]
\left[\begin{matrix}
c_0\\c_1
\end{matrix}\right].
\end{equation*}
It follows that
\begin{equation*}
c_0=(x+x')/2~\textrm{and}~
c_1=(xh_m(\tau_K)+x'(-h_m(\tau_K)))/2h_m(\tau_K)^2,
\end{equation*}
which belong to $\mathcal{O}_{H_{K,2^{m-1}}}[1/2]$ by (\ref{square2}).
This shows that $x\in\mathcal{O}_{H_{K,2^{m-1}}}[1/2,h_m(\tau_K)]$, and hence
$\mathcal{O}_{H_{K,2^m}}[1/2]\subseteq\mathcal{O}_{H_{K,2^{m-1}}}[1/2,h_m(\tau_K)]$. Hence we achieve
\begin{equation*}
\mathcal{O}_{H_{K,2^n}}[1/2]\subseteq\mathcal{O}_{H_{K,4}}[1/2,h_3(\tau_K),\ldots,h_n(\tau_K)].
\end{equation*}
This completes the proof.
\end{proof}

\bibliographystyle{amsplain}

\address{
Research Institute of Mathematics \\
Seoul National University\\
Seoul 151-747\\
Republic of Korea} {zandc@snu.ac.kr}
\address{
Department of Mathematical Sciences \\
KAIST \\
Daejeon 305-701 \\
Republic of Korea} {jkkoo@math.kaist.ac.kr}
\address{
Department of Mathematics\\
Hankuk University of Foreign Studies\\
Yongin-si, Gyeonggi-do 449-791\\
Republic of Korea } {dhshin@hufs.ac.kr}

\end{document}